\documentclass[letterpaper,11pt,reqno]{amsart}
\usepackage[utf8]{inputenc}
\usepackage{amsmath}
\usepackage{amsthm}
\usepackage{amssymb}
\usepackage[colorlinks=true, linkcolor=blue, citecolor=blue, urlcolor=blue]{hyperref}
\usepackage{amsfonts,mathrsfs}
\usepackage{stmaryrd}
\usepackage{amsxtra}  
\usepackage{epsfig}
\usepackage{verbatim}
\usepackage{enumerate}
\usepackage{xcolor}
\usepackage{enumitem}
\usepackage{bigints}
\usepackage{mathrsfs}
\usepackage{tikz-cd}
\usepackage{bm,bbm}
\usepackage{hyperref}
\usepackage[all]{xy}
\usepackage{mathtools, hyperref}
\usepackage{tikz}
\usepackage{tikz-cd}
\usetikzlibrary{shapes}
\usetikzlibrary{plotmarks}
\usepackage{mathabx} 
\usepackage{float}
\usepackage{orcidlink}

\theoremstyle{plain}
\newtheorem{theorem}{Theorem}[section]
\newtheorem{proposition}[theorem]{Proposition}
\newtheorem{lemma}[theorem]{Lemma}
\newtheorem{corollary}[theorem]{Corollary}

\theoremstyle{definition}

\theoremstyle{remark}
\newtheorem{exmp}[theorem]{Example}
\newtheorem{remark}[theorem]{Remark}

\newcommand{\abs}[1]{\left\vert#1\right\vert}

\newcommand{\ol}{\overline}

\newcommand{\id}{\mathrm{id}}

\newcommand{\ipr}[1]{\left\langle #1 \right\rangle}

\newcommand{\xdownarrow}[1]{%
  {\left\downarrow\vbox to #1{}\right.\kern-\nulldelimiterspace}
}

\newcommand{\cx}{{\mathbb C}}

\newcommand{\C}{{\mathbb C}}

\title[The $L^p$- regularity for the Bergman projection of Rudin ball quotients in $\mathbb C^2$]{The $L^p$- regularity problem  for the Bergman projection of two-dimensional Rudin ball quotients}
\thanks{A.M. was partially supported by the European Union-NextGenerationEU, under the National
Recovery and Resilience Plan (NRRP), Mission 4, Component 2, Investment 1.1, funding call PRIN
2022 D.D. 104 published on 2.2.2022 by the Italian Ministry of University and Research (Ministero
dell'Universit\`a e della Ricerca), Project Title: TIGRECO-TIme-varying signals on Graphs: REal and COm-plex methods-CUP F53D23002630001. A.M is also member of Indam-GNAMPA. Part of this work was carried out during A.M.’s visit to Central Michigan University in March 2025, supported by the University of Bergamo Outgoing Program. A.M. gratefully acknowledges the warm hospitality of the Department of Mathematics at Central Michigan University}

\author{Debraj Chakrabarti}
\address{Department of Mathematics, Central Michigan University, Mt. Pleasant, MI 48859, USA}
\email{chakr2d@cmich.edu}

\author[A. Monguzzi]{Alessandro Monguzzi\orcidlink{0000-0003-3233-5000}}
\address{Dipartimento di Ingegneria Gestionale, dell’Informazione e della Produzione, Universit\`a
degli Studi di Bergamo, Viale G. Marconi 5, 24044, Dalmine BG, Italy}
\email{alessandro.monguzzi@unibg.it}
\subjclass[2020]{32A25, 32A36}
\keywords{Bergman projection, Rudin ball quotients, $L^p$ regularity, reflection groups}

\begin{document}

\begin{abstract}
    We solve the $L^p$-regularity problem of the Bergman projection of two-dimensional Rudin ball quotients. 
\end{abstract}
\maketitle
\section{Introduction and Main Results}
Let $D$ be a domain in $\mathbb C^n$, i.e., a non-empty, open and connected subset of $\mathbb C^n$.
The Bergman space of $D$ is defined as
\[
A^2(D) := L^2(D) \cap \mathcal{O}(D),
\]
where $L^2(D)$ denotes the space of square-integrable functions on $D$ (with respect to the Lebesgue measure), and $\mathcal{O}(D)$ is the space of holomorphic functions on $D$. It is well known that $A^2(D)$ is a closed subspace of $L^2(D)$.

The orthogonal projection $P_D:L^2(D)\to A^2(D)$ is called the \emph{Bergman projection} associated with $D$ and can be represented as an integral operator with kernel $K_D(z,w)$, the \emph{Bergman kernel} of $D$.  

While $P_D$ is bounded on $L^2(D)$ by construction, understanding its behavior on other $L^p$ spaces is a subtle and challenging problem. More precisely, one seeks to determine for which $p \in [1,+\infty]$ the estimate  
\begin{equation}\label{eq:basic_estimate}
\|P_D f\|_{L^p} \leq C \|f\|_{L^p} \qquad \forall\, f \in L^2(D) \cap L^p(D)
\end{equation}
holds with a constant $C>0$. This question is often referred to as the \emph{$L^p$-regularity problem for the Bergman projection}, and its resolution depends sensitively on the geometry of the domain $D$. For instance, it was proved in \cite{D_L1} that \eqref{eq:basic_estimate} fails for \( p = 1 \) whenever \( D \) is a smooth domain. However, it is not known if the same result is true for a generic bounded domain. Classical results reveal a highly variable picture: for various classes of smooth pseudoconvex domains \cite{forelli_rudin, phong_stein, NRSW, mcneal_stein, mcneal_singular, charpentier_dupain, lanzani_stein_minimal}, the range of $p$ is known to be the whole interval  $(1,\infty)$. However, there are many examples where \( p \) is restricted to a proper subinterval of \( (1,\infty) \) \cite{krantz_peloso_houston, edholm_pacific, edholm_mcneal, monguzzi}, or even to the trivial case \( p = 2 \) only \cite{barrett_irregularity, zeytuncu, krantz_peloso_stoppato, EM3, krantz_monguzzi_peloso_stoppato}. For a detailed historical account and further references, see the recent survey \cite{zeytuncu_survey}.

Despite the extensive literature on the subject, the precise relationship between the geometry of the domain and the regularity of the Bergman projection remains poorly understood. In recent years, this question has been investigated in increasingly general settings, in the hope of uncovering deeper structural principles governing its $L^p$ regularity. In this paper we are interested to the setting of certain \emph{quotient domains}, equivalently, to certain classes of singular domains that can be covered, in the sense of ramified coverings, by simpler domains, e.g., by polydiscs or balls. Progresses for the $L^p$-regularity problem for some families of such domains have been made in  \cite{chen_krantz_yuan_polydisc, bender_chakrabarti_edholm_mainkar, DM, CJY, HW, LW, JLQW}. Other related papers worth mentioning are \cite{A, GG, CCGGW, EXX, EGX,  Ghosh_JMAA, QMZ}. Here we focus in particular on some domains obtained by taking the quotient of the unit ball $B_2$ of $\C^2$ with respect to the action of a finite unitary reflection group. These domains are sometimes called \emph{Rudin ball quotients} because of the seminal paper \cite{rudin_refl}.

Throughout the paper we use the notation $c$ to denote a positive constant, whose value may change from one occurrence to another.

\subsection{Rudin ball quotients} Let $G$ be a finite unitary reflection group (from now on, f.u.r.g.), that is, a finite subgroup of the unitary group $U(n)$ generated by reflections. A unitary reflection is an element of $U(n)$ of finite order that fixes pointwise exactly a hyperplane. We refer to \cite{lehrer_taylor} for the theory of f.u.r.g.'s. These groups are characterized among finite subgroups of $U(n)$ by the property that their ring of invariants,
\[
\C[z_1,\ldots,z_n]^G=\big\{P\in \C[z_1,\ldots,z_n]: P(g.z)=P(z)\quad \forall g\in G\big\},
\]
is generated by $n$ algebraically independent homogeneous polynomials $P_1,\ldots P_n$. Here, and from now on, we use the notation $g.z$ to denote the action of $g$ in $G$ on $\C^2$. 

For any f.u.r.g $G$ and any choice of generators $P_1,\ldots, P_n$ as above we define a  \emph{$G$-orbit map} $\pi:\C^n\to\C^n$ as
\[
\pi(z)=\big(P_1(z),\ldots P_n(z)\big), \quad z\in\C^n.
\]
Such maps have the following significant property.
\begin{proposition}[{\cite[Theorem 1.6]{rudin_refl}}, {\cite[Proposition 1.7]{DM}}
] Let $G$ be a f.u.r.g and let $\pi$ be a $G$-orbit map as above. Let $B_n$ be the unit ball of $\C^n$. Then, $D:=\pi(B_n)$ is a bounded domain and $\pi: B_n\to D$ is a proper holomorphic mapping.     
\end{proposition}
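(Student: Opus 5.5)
The plan is to first show that $\pi:\C^n\to\C^n$ is a proper map, and then to transfer this properness to the restriction $\pi|_{B_n}:B_n\to D$. Openness and connectedness of $D$ will follow from the open mapping behaviour of a finite holomorphic map, and boundedness is immediate.

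\emph{Step 1: properness of $\pi$ on $\C^n$ and finite fibres.} Let $d_j=\deg P_j$. The key algebraic input is that $\C[z_1,\ldots,z_n]$ is a finitely generated module over $\C[P_1,\ldots,P_n]$; for instance, each $z_k$ is a root of the monic polynomial
\[
\prod_{g\in G}\bigl(T-(g.z)_k\bigr),
\]
whose coefficients are $G$-invariant and hence polynomials in $P_1,\ldots,P_n$. Consequently, if $\pi(z)=0$ then every coordinate $z_k$ satisfies $T^{|G|}=0$, so $z=0$. By homogeneity and compactness of the unit sphere, $\sum_j |P_j(z)|^{1/d_j}\ge c|z|$. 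Hence $\pi$ is proper on $\C^n$, and its fibres are finite; in fact the fibres are exactly the $G$-orbits. Indeed, invariant polynomials separate orbits of a finite group: given two distinct orbits, choose a polynomial that is $0$ on one and $1$ on the other and average it over $G$.

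\emph{Step 2: $\pi|_{B_n}:B_n\to D$ is proper.} Since the fibres of $\pi$ are $G$-orbits and the ball is $G$-invariant (because $G\subset U(n)$), we have $\pi^{-1}(\pi(B_n))=B_n$. Let $K\subset D$ be compact. Then $\pi^{-1}(K)$ is compact in $\C^n$ by Step 1, and it is contained in $\pi^{-1}(D)=B_n$. Therefore $(\pi|_{B_n})^{-1}(K)$ is compact in $B_n$.

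\emph{Step 3: $D$ is a bounded domain.} $D$ is connected and bounded, being the continuous image of the connected, relatively compact set $B_n$. The main point, and the expected obstacle, is openness of $D$. Since $\pi$ has finite fibres, it is a finite holomorphic map between equidimensional spaces, and such maps are open by Remmert's open mapping theorem. Alternatively, one can argue more elementarily. The Jacobian $\det \pi'$ is a nonzero polynomial (a classical fact, the product of the linear forms of the reflecting hyperplanes up to a constant). Off its zero set, $\pi$ is a local biholomorphism, so the image of a small ball there is open. At a general point $z_0$, finiteness of the fibre together with the properness of Step 1 allows one to isolate a neighbourhood $U$ of $z_0$ with $z_0$ the only preimage of $\pi(z_0)$ in $\overline U$. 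The local degree of $\pi$ at $z_0$ is then positive, since nearby regular points contribute orientation-preserving preimages. Positive local degree forces $\pi(U)$ to contain a neighbourhood of $\pi(z_0)$.

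Finally, set $U=B_n$ in this argument. Openness of $\pi$ gives that $D=\pi(B_n)$ is open, which completes the proof that $D$ is a bounded domain.
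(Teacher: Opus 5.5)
Your argument is correct. The paper gives no proof of this proposition; it imports it from \cite[Theorem 1.6]{rudin_refl} and \cite[Proposition 1.7]{DM}. So what you have written is a self-contained replacement for a citation, not a variant of an in-paper argument. Your route uses only classical invariant theory plus one analytic fact:
\begin{itemize}
\item Integrality of $\C[z_1,\ldots,z_n]$ over $\C[P_1,\ldots,P_n]$ gives $\pi^{-1}(0)=\{0\}$ and finite fibres. Together with homogeneity, it gives properness of $\pi$ on $\C^n$.
\item Averaging an interpolating polynomial shows that the fibres are exactly the $G$-orbits. Hence $\pi^{-1}(D)=B_n$, and $\pi|_{B_n}$ is proper.
\item Openness of $D$ comes from the open mapping theorem for holomorphic maps with discrete fibres between manifolds of the same dimension.
\end{itemize}
A side benefit is that you also prove the normality of the covering (fibres $=$ orbits), which the paper likewise takes from \cite{DM}. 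The orbit-separation step is genuinely needed in Step 2. Without it, nothing rules out a sequence in $B_n$ tending to the sphere while its images stay in a compact subset of $D$.

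A few small tightenings.
\begin{itemize}
\item Justify in one line that the coefficients of $\prod_{g}(T-(g.z)_k)$ vanish on $\pi^{-1}(0)$. Write each coefficient as $F_j(\pi(z))$; since the coefficient is homogeneous of positive degree, $F_j(0)=F_j(\pi(0))=0$.
\item Isolating $z_0$ in its fibre in Step 3 uses only finiteness of the fibre, not properness.
\item In the degree-theoretic alternative, take $z_1\in U$ near $z_0$ with $J(\pi)(z_1)\neq 0$, and choose via Sard a regular value $y$ near $\pi(z_1)$. Then $y$ has at least one preimage in $U$ and each preimage contributes $+1$; that is what makes the local degree positive.
\end{itemize}
Since Remmert's theorem already settles openness, these are cosmetic.
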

Rudin actually proved in \cite{rudin_refl} that any proper holomorphic map defined on the unit ball and satisfying a mild boundary regularity condition is, up to biholomorphisms of the domain and of the target, equivalent to a $G$-orbit map. Moreover, any \emph{proper map} $\pi:B_n\to D$ actually defines a \emph{ramified covering} (see, e.g., \cite[Remark 1.2]{DM}). Recall that $\pi$ is a proper map if $\pi^{-1}(K)$ is compact whenever $K\subseteq D$  is compact, whereas by ramified covering we mean that there exist two proper complex analytic subsets $V\subseteq B_n$ and $W\subseteq D$ such that $\pi: B_n\backslash V\to D\backslash W$ is a finite-sheeted covering map in the usual sense (see, e.g., Chapter III of \cite{bredon_book}) that is also holomorphic. In this case we say that $D$ is covered by the ball $B_n$ via the map $\pi$. It is clear that a $G$-orbit map $\pi$ is constant on the orbit of $G$ acting on $B_n$, hence we briefly write $\pi(B_n)=D=B_n/G$. From now on, when we refer to a \emph{Rudin ball quotient}, we mean a domain $D$ constructed in this manner.

To every ramified covering $\pi : B_n \to D$ we associate its \emph{covering group}  
\[
G = \big\{ g \in \mathrm{Aut}(B_n \setminus V) \ :\ \pi(g.z) = \pi(z) \ \ \forall z \in B_n \setminus V \big\}.
\]
The covering is called \emph{normal} if \(G\) acts transitively on the fibers of \(\pi\), that is,
\[
\pi(z_1) = \pi(z_2) \quad \Longleftrightarrow \quad \exists\, g \in G \ \text{with} \ g.z_1 = z_2.
\]
In particular, if $\pi:B_n\to D$ is a $G$-orbit map, then it is a \emph{normal ramified covering with covering group $G$} (\cite[Proposition 1.7]{DM}).

We are interested in the $L^p$-regularity of the Bergman projection on $D = \pi(B_n)$. Our approach will be to lift the $L^p$-regularity problem along a ramified covering, as it will be explained in the next section. Such an approach is not new, but it was first pioneered in \cite{bender_chakrabarti_edholm_mainkar} and later taken up again in \cite{DM}. We conclude the present section with an example of an infinite family of finite unitary reflection groups..

\begin{exmp}[{ \cite[Chapter 2]{lehrer_taylor}}] \label{ex:G} Let $m, n\geq1$ and assume that $\ell$ is a positive divisor of $m$.  Let $\theta$ be a primitive $m$-th root of unity. Then, the f.u.r.g. $G(m,\ell,n)$ consists of the linear transformations of the form
\[
(z_1,\ldots,z_n)\mapsto (\theta^{\nu_1}z_{\tau(1)},\ldots, \theta^{\nu_n}z_{\tau(n)})
\]
where $\tau$ is a permutation of $\{1,\ldots, n\}$ and the $\nu_j$'s are integers whose sum is divisible by $\ell$.

An orbit map for $G(m,\ell,2)$ is given by
\begin{equation}\label{orbit_map_G}
\pi(z_1,z_2)=\big(z_1^m+z_2^m, (z_1 z_2)^{\frac m\ell}\big).
\end{equation}
\end{exmp}

\subsection{Ramified coverings and $L^p$-regularity}
We now state a result illustrating how the $L^p$-regularity problem can be lifted along a ramified covering. This method was first successfully introduced in \cite{bender_chakrabarti_edholm_mainkar} in the study of $L^p$-regularity for monomial polyhedra, a class of domains admitting a ramified covering by polydiscs. It was subsequently employed in \cite{DM} with applications to the $L^p$-regularity problem for a particular family of Rudin ball quotients.

We present the result specifically for Rudin ball quotients, although it holds in greater generality. No proofs are provided here; for full details, we refer the reader to \cite[Section 4]{bender_chakrabarti_edholm_mainkar} and \cite[Section 2]{DM}.
The result is a consequence of a theorem of S.~Bell \cite[Theorem~1]{bell_proper}.

\begin{proposition}\label{prop: Lp_regularity_equivalence}

Let $G$ be a f.u.r.g., let $\pi$ be a $G$-orbit map, and set $D = \pi(B_n)$. Fix $p \in [1, +\infty)$ and $C > 0$. Then the estimate
\begin{equation*}
\int_D |P_D u|^p \leq C \int_D |u|^p \qquad \forall\, u \in L^2(D) \cap L^p(D)
\end{equation*}
is equivalent to the weighted estimate
\begin{equation}\label{Q_estimate}
\int_{B_n} |Q_{B_n,G} f|^p \,\sigma \leq C \int_{B_n} |f|^p \,\sigma
\qquad \forall\, f \in L^2(B_n) \cap L^p(B_n, \sigma),
\end{equation}
where $\sigma = |J(\pi)|^{2-p}$.  
Here $Q_{B_n,G}$ is the integral operator with kernel
\begin{equation}\label{averaged_kernel}
K_G(z,w) = \frac{1}{|G|} \sum_{g \in G} K_{B_n}(z, g.w) \, \overline{J(g)},
\end{equation}
where $K_{B_n}(z,w)$ denotes the Bergman kernel of the unit ball $B_n$ and $J(g)$ is the determinant of the complex jacobian matrix of $g$.
\end{proposition}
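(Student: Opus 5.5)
The plan is to transport the estimate on $D$ to the ball through the map $\pi$, using Bell's transformation formula for the Bergman projection under proper maps. I write $J(\pi)$ for the complex Jacobian determinant of $\pi$. The covering $\pi:B_n\setminus V\to D\setminus W$ has $|G|$ sheets, and $V,W$ have measure zero. Hence the change of variables gives
\[
\int_{B_n} (\phi\circ\pi)\,|J(\pi)|^2 = |G|\int_D \phi
\]
for every nonnegative measurable $\phi$ on $D$.

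\textbf{Step 1: the lifting operator.} Define $\Lambda u := (u\circ\pi)\,J(\pi)$ for $u$ on $D$. By the change of variables above, $\Lambda$ is $|G|^{-1/2}$ times an isometry of $L^2(D)$ into $L^2(B_n)$. Its image is the space of functions $f$ satisfying the transformation rule
\[
f(g.z)\,J(g)(z)=f(z)\quad\text{for all } g\in G.
\]
This follows from the chain rule, since $\pi\circ g=\pi$ gives $J(\pi)(g.z)J(g)(z)=J(\pi)(z)$. On $B_n\setminus V$, any such $f$ descends to a function $u$ on $D$ with $f=\Lambda u$.

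Bell's theorem \cite[Theorem~1]{bell_proper} gives $P_{B_n}\Lambda=\Lambda P_D$. Next, I check that the operator with kernel \eqref{averaged_kernel} equals $P_{B_n}$ composed with the averaging operator
\[
Af(z)=\frac1{|G|}\sum_{g} f(g.z)J(g)(z).
\]
To see this, substitute $w\mapsto g^{-1}.w$, use that $|J(g)|=1$ since $g$ is unitary, and use the invariance $K_{B_n}(g.z,g.w)J(g)(z)\overline{J(g)(w)}=K_{B_n}(z,w)$. Since $A$ is the orthogonal projection onto the image of $\Lambda$, we get $Q_{B_n,G}=P_{B_n}A$. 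Moreover, $Q_{B_n,G}\Lambda=\Lambda P_D$ and $Q_{B_n,G}$ kills the orthogonal complement of that image.

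\textbf{Step 2: transfer of $L^p$ norms.} For $u$ on $D$,
\[
\int_{B_n}|\Lambda u|^p\,|J(\pi)|^{2-p}=\int_{B_n}|u\circ\pi|^p|J(\pi)|^2=|G|\int_D|u|^p .
\]
Applying this to both $u$ and $P_Du$, the $L^p(D)$ estimate with constant $C$ is equivalent to \eqref{Q_estimate} restricted to $f$ in the image of $\Lambda$, with the same $C$. Here I also note that $u\in L^2\cap L^p$ if and only if $\Lambda u\in L^2\cap L^p(\sigma)$.

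\textbf{Step 3: removing the restriction.} For general $f$, we have $Q_{B_n,G}f=Q_{B_n,G}Af$, and $Af$ lies in the image of $\Lambda$. So it suffices to show
\[
\|Af\|_{L^p(\sigma)}\le\|f\|_{L^p(\sigma)}.
\]
Since $\sigma$ is $G$-invariant (because $|J(\pi)(g.z)|=|J(\pi)(z)|$) and $|J(g)|=1$, each map $f\mapsto f(g.\cdot)J(g)$ is an isometry of $L^p(\sigma)$. The bound then follows from Minkowski's inequality, so the constant is preserved.

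The main obstacle is Step 1: justifying Bell's identity, and the identification $Q_{B_n,G}=P_{B_n}A$ with care about the branch locus $V$. This requires that functions in the image of $\Lambda$ extend across $V$, and that holomorphic functions descend to holomorphic functions on $D$ via Riemann's removable singularity theorem. The rest is bookkeeping with the change of variables.
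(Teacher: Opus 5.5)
Your argument is correct and follows the route the paper indicates (the paper gives no proof itself, deferring to Bell's transformation formula as used in \cite[Section 4]{bender_chakrabarti_edholm_mainkar} and \cite[Section 2]{DM}): you lift by $\Lambda u=(u\circ\pi)J(\pi)$, identify $Q_{B_n,G}=P_{B_n}A$ with $A$ the orthogonal projection onto the twisted-invariant functions, and use that $A$ is a contraction on $L^p(\sigma)$ to remove the restriction to the image of $\Lambda$ without changing $C$. One harmless slip is that $\|\Lambda u\|_{L^2(B_n)}=|G|^{1/2}\|u\|_{L^2(D)}$, so $\Lambda$ is $|G|^{1/2}$ (not $|G|^{-1/2}$) times an isometry; also, the branch-locus issues you flag need no extra work, since they are covered by Bell's theorem together with $V$ being a null set.
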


We point out that in the statement above, and in the rest of the paper, integrals are always with respect to the Lebesgue measure, which is therefore omitted from the notation.

\subsection{Main Results}
By an elementary manipulation, it is clear that the weighted estimate \eqref{Q_estimate} is equivalent to an unweighted estimate for the integral operator with kernel
\begin{equation}\label{kernel_KGp}
K_{G,p}(z,w) = |J\pi(z)|^{\frac{2}{p}-1}K_G(z,w)|J\pi(w)|^{1-\frac{2}{p}},
\end{equation}
where $J(\pi)$ is the determinant of the complex jacobian matrix of $\pi$.
Our main theorem provides a bound for the size of this kernel in dimension $2$.
\begin{theorem}\label{thm:main_result}
Let $p$ be in $(1,+\infty)$. There exists a constant $c>0$ such that 
\begin{equation}\label{eq:main_estimate}
|K_{G,p}(z,w)|\leqslant C\sum_{g\in G}|K_{B_2}(g.z,w)|
\end{equation}
for every $(z,w)$ in $B_2\times B_2$.
\end{theorem}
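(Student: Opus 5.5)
We plan to exploit the structure of the averaged kernel \eqref{averaged_kernel}. The Bergman kernel of $B_2$ is $K_{B_2}(z,w)=\frac{2}{\pi^2}(1-\langle z,w\rangle)^{-3}$. Since every $g\in G$ is unitary, $K_{B_2}(z,g.w)=K_{B_2}(g^{-1}.z,w)$ and $|J(g)|=1$. Hence trivially
\[
|K_G(z,w)|\le \frac1{|G|}\sum_{g\in G}|K_{B_2}(g.z,w)|,
\]
and the whole difficulty lies in the weight factor $\big(|J\pi(z)|/|J\pi(w)|\big)^{\frac2p-1}$ in \eqref{kernel_KGp}.

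The Jacobian $J\pi$ is, up to a constant, the product $\prod_{H}\ell_H^{e_H-1}$ over the reflecting hyperplanes $H=\{\ell_H=0\}$ of $G$, where $e_H$ is the order of the cyclic stabilizer of $H$. The factor is unbounded only when one of $|J\pi(z)|$, $|J\pi(w)|$ is much smaller than the other, that is, when one of the points lies near a mirror. The key observation we will use is that cancellation in the sum over $g$ compensates. Concretely, $K_G(z,\cdot)$, viewed as a function of $w$, satisfies the twisted invariance $K_G(z,h.w)=K_G(z,w)J(h)$ for $h\in G$. Therefore it vanishes on each mirror $H$ to order at least $e_H-1$, since it transforms by the character $\det$ under the stabilizer of $H$. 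Thus $K_G(z,w)=J\pi(w)\,\overline{\phantom{x}}$-type factorization holds: $K_G(z,w)=\overline{J\pi(w)}\,J\pi(z)\,\widetilde K(z,w)$. Here $\widetilde K$ is holomorphic in $z$ and antiholomorphic in $w$; this is the classical fact that anti-invariant polynomials are the Jacobian times invariants. The plan is then to interpolate. For $p<2$ the exponent $\frac2p-1$ is positive, so a small $|J\pi(w)|$ in the denominator must be absorbed using the vanishing in $w$. For $p>2$ the roles of $z$ and $w$ swap. Since $|\frac2p-1|<1$, we only need a fraction of the vanishing. So it suffices to prove two estimates:
\[
|K_G(z,w)|\le c\,\frac{|J\pi(w)|}{|J\pi(z)|}\sum_g|K_{B_2}(g.z,w)|\quad\text{and}\quad |K_G(z,w)|\le c\,\frac{|J\pi(z)|}{|J\pi(w)|}\sum_g|K_{B_2}(g.z,w)|.
\]
Taking the geometric mean of one of these with the trivial bound then gives \eqref{eq:main_estimate} for every $p\in(1,\infty)$.

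To prove these ratio bounds we localize. Fix $w$, and let $H_1,\dots$ be the mirrors. If $|J\pi(w)|\gtrsim |J\pi(z)|$ up to the needed factor, there is nothing to prove. Otherwise $w$ is close to some mirror $H$, relative to the scale $|1-\langle z,w\rangle|$. In dimension $2$, each point lies near at most one mirror, or near the origin where all mirrors meet. The origin is harmless, because there the kernel is bounded. Near $H$ we write $w=w_0+t\nu$, with $w_0\in H$ and $\nu$ the unit normal. We then group the sum over $g$ into cosets of the cyclic stabilizer $\langle s\rangle$ of $H$, of order $e=e_H$. Each coset sum
\[
\sum_{k=0}^{e-1}K_{B_2}(z,gs^k.w)\,\overline{J(gs^k)}
\]
is a discrete $(e-1)$-th order difference of $\zeta\mapsto(1-\langle g^{-1}z,w_0+\zeta\nu\rangle)^{-3}$ at the points $\zeta=\omega^kt$. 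By Taylor expansion with remainder, or a Cauchy estimate on a disc of radius comparable to $|1-\langle z,w\rangle|$, the coset sum is bounded by $|t|^{e-1}|1-\langle g^{-1}z,w\rangle|^{-3-(e-1)}$. This is comparable to $\frac{|\ell_H(w)|^{e-1}}{|1-\langle g^{-1}z,w\rangle|^{e-1}}|K_{B_2}(g^{-1}.z,w)|$. The remaining factors of $J\pi(w)$ are comparable to those of $J\pi(z)$ in this regime, and we will check this using $|\ell(z)-\ell(w)|\lesssim|1-\langle z,w\rangle|^{1/2}$. We must also compare $|\ell_H(z)|$ with $|1-\langle z,w\rangle|$.

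The main obstacle will be exactly this last comparison. The Cauchy estimate gains only powers of $|1-\langle z,w\rangle|$, whereas the quantity we need to compensate is $|\ell_H(z)|$. Because of the nonisotropic geometry of the ball, the Euclidean distance to the mirror behaves like $|1-\langle z,w\rangle|^{1/2}$, not $|1-\langle z,w\rangle|$. We will therefore take the Cauchy disc to have radius comparable to $|1-\langle z,w\rangle|^{1/2}$ in the normal direction. We will check that $|1-\langle z,w_0+\zeta\nu\rangle|$ stays comparable on this disc, since $|\langle z,\nu\rangle\zeta|\lesssim|\ell_H(z)|\,|1-\langle z,w\rangle|^{1/2}$. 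We then split into the cases $|\ell_H(z)|\lesssim|1-\langle z,w\rangle|^{1/2}$ and the complementary one. In the complementary case, $|\ell_H(z)|\gg|\ell_H(w)|$, and the points $g.z$ are separated, so the trivial bound suffices. This case analysis, made uniform over all mirrors, is where we expect most of the work.
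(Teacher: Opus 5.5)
Your route is correct in outline but genuinely different from the paper's.

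\textbf{How the paper argues.} The paper fixes $p$ and covers the singular set by the regions $\mathcal U_g(\varepsilon)$. Where only one singularity is active, it uses the global skew factorization $K_G=J\pi(z)\overline{J\pi(w)}M(z,\overline{w})/\prod_\ell(1-\langle \ell.z,w\rangle)^3$ (Lemma~\ref{lem:easy_bounds}). On $\mathcal U_r(\varepsilon)\cap\mathcal U_{\id}(\varepsilon)$ it splits $K_G=c(K_H+B)$ with $H=\langle r\rangle$. The term $K_H$ is handled by the explicit series of Lemma~\ref{lem:easy_identity} and the $\mathcal G/\mathcal G^c$ split of Lemma~\ref{lem:cyclic_group}. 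The vanishing of $B$ in $w$ is extracted algebraically: skew-invariance in $z$, then $B(z,w)=\overline{B(w,z)}$ and a divisibility argument. The denominator is controlled by Lemma~\ref{lem:regions_intersection} and Remark~\ref{rkm:regions_intersection_3}.

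\textbf{How you argue.} You instead (i) reduce, by a pointwise geometric mean with the trivial bound, to the two endpoint ($p=1$, $p=\infty$) ratio bounds. These are exchanged by $K_G(z,w)=\overline{K_G(w,z)}$, so only one needs proof. You then (ii) localize in $w$ alone, and (iii) decompose modulo the stabilizer of the mirror acting on $w$. As a result \emph{every} coset sum, not only the identity one, is a root-of-unity filter vanishing to order $e_H-1$ in $\ell_H(w)$.

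\textbf{What each buys.} Your route avoids the $\mathcal U_g(\varepsilon)$ bookkeeping, the triple-intersection remark and the algebraic treatment of $B$. It also shows the pointwise bound holds even at $p=1,\infty$. The price is careful scale management, which the paper's explicit cyclic computation does for you. Your nonisotropic Cauchy estimate is the same phenomenon as the paper's inequality $|z_1|^2\le 2|1-z_2\overline{w_2}|$.

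\textbf{Three points to fix when writing it out.}

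\emph{The complementary case.} Your justification there is wrong. If $|\ell_H(z)|\gg|1-\langle z,w\rangle|^{1/2}$, then $|\ell_H(z)-\ell_H(w)|\le|z-w|\le\sqrt2\,|1-\langle z,w\rangle|^{1/2}$ forces $|\ell_H(w)|\approx|\ell_H(z)|$, and \emph{that} is why the trivial bound suffices. If your claim $|\ell_H(z)|\gg|\ell_H(w)|$ were true, the trivial bound would not give your first ratio bound.

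\emph{Per-coset scale.} The scale and the linear form must be attached to each coset. For $g\langle s\rangle$ write $a_g=1-\langle g^{-1}.z,w_0\rangle$, $b_g=\ell_H(g^{-1}.z)$ and $t=\ell_H(w)$. One always has $|b_g|^2\le 2|a_g|$ and $|t|^2\le 2|a_g|$, which gives two cases.
\begin{itemize}
\item If $|b_gt|\le\frac12|a_g|$, the filter gives $|b_gt|^{e-1}|a_g|^{-e-2}\lesssim(|t|/|b_g|)^{e-1}|K_{B_2}(g^{-1}.z,w)|$.
\item Otherwise $|t|\approx|b_g|$, and the trivial bound applies.
\end{itemize}
You then convert using $|J\pi(z)|=|J\pi(g^{-1}.z)|\le C|b_g|^{e-1}$. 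You also need a lower bound on the remaining factors of $J\pi(w)$, which holds because $w$ is near $H$ only and $|w|\ge\frac12$. Only this one-sided bound is needed. The two-sided comparability of those factors with those of $J\pi(z)$ that you announce fails when $|1-\langle g^{-1}.z,w\rangle|$ is not small.

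\emph{Near the origin.} Boundedness of $K_G$ is not enough there, since $|J\pi(w)|/|J\pi(z)|$ can tend to $0$. Use the factorization, which gives $|K_G|\lesssim|J\pi(z)|\,|J\pi(w)|$ for $|w|\le\frac12$.
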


As a consequence we solve the $L^p$-regularity problem for two-dimensional Rudin ball quotients.
\begin{corollary}\label{cor:Lp_regularity}
Let $D$ be a Rudin ball quotient in $\C^2$. Then, there exists a constant $C>0$ such that 
\[
\|P_Df\|_{L^p}\leqslant c\|f\|_{L^p}\qquad \forall f\in L^2(D)\cap L^p(D).
\]
if and only if $p\in(1,+\infty)$.
\end{corollary}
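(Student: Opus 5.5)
The plan is to prove the two directions separately. Sufficiency for $p\in(1,\infty)$ will follow from Theorem~\ref{thm:main_result} together with Proposition~\ref{prop: Lp_regularity_equivalence}. Necessity, i.e.\ failure at $p=1$ and $p=\infty$, will come from a blow-up of $L^1$ norms of the Bergman kernel of $D$, computed by lifting to the ball.

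\emph{Sufficiency.} By definition, a Rudin ball quotient is $D=\pi(B_2)$ for some f.u.r.g.\ $G\subset U(2)$ and some $G$-orbit map $\pi$. Fix $p\in(1,\infty)$.
\begin{enumerate}[label=(\roman*)]
\item By Proposition~\ref{prop: Lp_regularity_equivalence}, it suffices to prove \eqref{Q_estimate} with $\sigma=|J\pi|^{2-p}$.
\item Substituting $f=|J\pi|^{1-2/p}h$, the estimate \eqref{Q_estimate} is equivalent to unweighted $L^p(B_2)$ boundedness of the operator $T_p$ with kernel $K_{G,p}$ from \eqref{kernel_KGp}.
\item By \eqref{eq:main_estimate}, $|T_ph(z)|\le C\sum_{g\in G}(S|h|)(g.z)$, where $S$ is the positive operator with kernel $|K_{B_2}(z,w)|$.
\item $S$ is bounded on $L^p(B_2)$ for every $1<p<\infty$ by the Forelli--Rudin estimates. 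Alternatively, use the Schur test with test function $(1-|w|^2)^{-\varepsilon}$ and $\varepsilon$ small.
\item Each $g\in G$ is unitary, so composition with $g$ is an isometry of $L^p(B_2)$.
\end{enumerate}
Hence $\|T_ph\|_{L^p}\le C|G|\,\|S\|_{L^p\to L^p}\|h\|_{L^p}$, and the first direction follows.

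\emph{Necessity.} Since $P_D$ is self-adjoint, boundedness on $L^\infty$ and on $L^1$ are equivalent by duality. So it suffices to show that $P_D$ is unbounded on $L^\infty$. If $P_D$ were bounded on $L^\infty$, testing against $L^\infty$ functions of the form $\overline{\operatorname{sgn} K_D(a,\cdot)}$, localized to compact subsets of $D$, would give
\[
\sup_{a\in D}\int_D |K_D(a,\zeta)|\,dV(\zeta)<\infty .
\]
I will contradict this using the transformation formula of Bell,
\[
K_D(\pi z,\pi w)\,J\pi(z)\,\overline{J\pi(w)}=\sum_{g\in G}K_{B_2}(z,g.w)\overline{J(g)} ,
\]
which holds up to a normalizing constant. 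Changing variables $\zeta=\pi(z)$ (the map $\pi$ is $|G|$-to-one off the branch locus) gives
\[
\int_D|K_D(\pi w,\cdot)|=\frac{c}{|J\pi(w)|}\int_{B_2}\Big|\sum_{g}K_{B_2}(z,g.w)\overline{J(g)}\Big|\,|J\pi(z)|\,dz .
\]

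Now choose a boundary point $\xi\in\partial B_2$ that lies off every reflecting hyperplane, so that $J\pi(\xi)\neq0$ and the points $g.\xi$ ($g\in G$) are pairwise distinct. Let $w=r\xi$ with $r\to1$, and restrict the integral to $U=B_2\cap B(\xi,\delta)$.
\begin{itemize}
\item On $U$, $|J\pi|$ is bounded above and below by positive constants.
\item On $U$, the terms $K_{B_2}(z,g.w)$ with $g\neq\mathrm{id}$ are uniformly bounded, because $g.w$ stays at a fixed positive distance from $\overline U$.
\item The main term satisfies $\int_U|K_{B_2}(z,w)|\,dz\gtrsim\log\frac1{1-r}$. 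This follows from the classical Forelli--Rudin asymptotics for $\int_{B_2}|1-\langle z,w\rangle|^{-3}$, localized near $\xi$.
\end{itemize}
Therefore $\int_D|K_D(\pi(r\xi),\cdot)|\to\infty$ as $r\to1$, so $P_D$ fails to be bounded for $p=\infty$, and hence also for $p=1$.

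The step I expect to be the main obstacle is this last lower bound. One has to check that the localization really captures the logarithmic divergence. One also has to verify that the approximate-identity argument gives the $L^1$ kernel bound without assuming a priori continuity issues; here the domain is bounded and $K_D(a,\cdot)$ is smooth inside, which should suffice.
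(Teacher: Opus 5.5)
Your proposal is correct and follows the paper's proof. For $1<p<\infty$ you combine Theorem~\ref{thm:main_result}, the equivalence with the weighted estimate \eqref{Q_estimate}, and the $L^p(B_2)$-boundedness of the positive Bergman operator, exactly as the paper does; you just write out the routine substitution and the unitary invariance under $g\in G$. For $p=1,\infty$ the paper only cites the classical ball argument, and your duality-plus-Bell-formula computation, giving $\int_D|K_D(\pi(r\xi),\cdot)|\gtrsim\log\frac{1}{1-r}$ at a boundary point $\xi$ off all reflecting hyperplanes, is a sound explicit version of that adaptation. The technical worries you flag at the end are harmless: $K_D(a,\cdot)\in L^2(D)\subset L^1(D)$ on the bounded domain $D$, and $P_Df$ is holomorphic, hence continuous.
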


We conclude the introduction with a brief explanation of why our results are established only in dimension $2$. Let us first fix some notation.

Since $G$ is a f.u.r.g., by definition it is generated by reflections.  
Given a reflection $r \in G$, let $Y_r$ denote its reflecting hyperplane,
\[
Y_r := \{ z \in \C^n \colon r.z = z \},
\]
and let $\mathcal R_G$ be the set of all reflecting hyperplanes of reflections in $G$. If $Y_r \in \mathcal{R}_G$, any unit vector $\rho$ orthogonal to $Y_r$ is called a \emph{root} of the reflection $r$.  
To each $Y \in \mathcal{R}_G$, we associate the subgroup
\[
G_Y := \{ g \in G \colon g.v = v \quad \forall v \in Y \},
\]
which consists of the identity together with all reflections $r \in G$ such that $Y_r = Y$.  
This subgroup $G_Y$ is cyclic of some finite order $m_Y$, which is called the \emph{multiplicity} of $Y$.

We recall the following classical result, which illustrates how the analytic description of a $G$-orbit map reflects the underlying geometry of the action of the group $G$.

\begin{proposition}[{\cite[Theorem 9.8]{lehrer_taylor}}]\label{prp_Jacobian}
Choose a root $\rho_Y$ for each reflecting hyperplane $Y$ of $G$. Then the Jacobian of a $G$-orbit map $\pi:\C^n \to \C^n$ is given by
\begin{equation}\label{eq:jacobian_pi}
J(\pi)(z) = c_\pi \prod_{Y \in \mathcal R_G} \left\langle z, \rho_Y \right\rangle^{m_Y - 1},
\end{equation}
where $c_\pi \in \C$ is a nonzero constant depending on the choice of generators, and $\left\langle \cdot, \cdot \right\rangle$ denotes the standard Hermitian product of $\C^n$.
\end{proposition}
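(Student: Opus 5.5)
The plan is to show that the product $\prod_{Y}\langle z,\rho_Y\rangle^{m_Y-1}$ divides $J(\pi)$ in $\C[z_1,\dots,z_n]$, and then to match degrees so that the quotient is a nonzero constant.

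\textbf{Step 1: $J(\pi)\not\equiv 0$.} The generators $P_1,\dots,P_n$ are algebraically independent. By the Jacobian criterion in characteristic zero, their differentials are then generically linearly independent. Hence $J(\pi)$ is a nonzero homogeneous polynomial of degree $\sum_{i=1}^n (d_i-1)$, where $d_i=\deg P_i$.

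\textbf{Step 2: local divisibility.} Fix $Y\in\mathcal R_G$ with root $\rho=\rho_Y$. After a unitary change of coordinates (which multiplies $J(\pi)$ by a unimodular constant), write $z=y+t\rho$ with $y\in Y$ and $t=\langle z,\rho\rangle$. The group $G_Y$ is cyclic of order $m_Y$, and its generator acts by $(y,t)\mapsto(y,\zeta t)$ with $\zeta$ a primitive $m_Y$-th root of unity. Each $P_i$ is $G_Y$-invariant, so expanding it in powers of $t$ only exponents divisible by $m_Y$ survive:
\[
P_i(y,t)=\sum_k a_{i,k}(y)\,t^{k m_Y}.
\]
Consequently $\partial P_i/\partial t$ is divisible by $t^{m_Y-1}$. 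Expanding the Jacobian determinant along the $t$-column shows that $t^{m_Y-1}=\langle z,\rho_Y\rangle^{m_Y-1}$ divides $J(\pi)$.

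Distinct hyperplanes give pairwise non-associate irreducible linear factors. By unique factorization, the whole product $\prod_{Y}\langle z,\rho_Y\rangle^{m_Y-1}$ therefore divides $J(\pi)$.

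\textbf{Step 3: degree count.} Since both polynomials are homogeneous, it remains to show that
\[
\sum_i (d_i-1)=\sum_{Y}(m_Y-1).
\]
The right-hand side equals $N$, the number of reflections in $G$, because each $G_Y$ contributes exactly $m_Y-1$ reflections and every reflection lies in exactly one $G_Y$.

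For the left-hand side I would use Molien's formula together with the polynomiality of $\C[z]^G$:
\[
\frac1{|G|}\sum_{g\in G}\frac{1}{\det(1-tg)}=\prod_{i=1}^n\frac1{1-t^{d_i}}.
\]
Compare the Laurent expansions of both sides at $t=1$.
\begin{itemize}
\item The identity alone contributes the pole of order $n$. Matching leading coefficients gives $|G|=\prod_i d_i$.
\item The poles of order $n-1$ on the left come exactly from the reflections. A reflection with eigenvalue $\lambda\neq1$ contributes $(1-t)^{-(n-1)}(1-\lambda t)^{-1}$, evaluated at $t=1$.
\item Summing $1/(1-\lambda)$ over the nontrivial elements of each cyclic $G_Y$ gives $(m_Y-1)/2$, since the terms pair as $\lambda,\bar\lambda$ and $\frac{1}{1-\lambda}+\frac{1}{1-\bar\lambda}=1$. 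The left-hand coefficient is therefore $N/(2|G|)$.
\item On the right, the $(1-t)^{-(n-1)}$ coefficient is $\sum_i(d_i-1)/(2\prod_j d_j)$.
\end{itemize}
Equating the two coefficients and using $|G|=\prod_j d_j$ yields $\sum_i(d_i-1)=N$.

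\textbf{Conclusion.} The quotient $J(\pi)\big/\prod_Y\langle z,\rho_Y\rangle^{m_Y-1}$ is then a homogeneous polynomial of degree $0$. It is nonzero by Step 1, hence equal to a nonzero constant $c_\pi$.

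The main obstacle is Step 3, the identity between $\sum_i(d_i-1)$ and the number of reflections. It requires Molien's theorem and a careful second-order Laurent expansion; Steps 1 and 2 are routine.
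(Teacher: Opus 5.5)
The paper gives no proof of this proposition (it is quoted from Lehrer--Taylor, Theorem 9.8, and used as a black box). Your argument is correct and is the classical Steinberg/Shephard--Todd proof behind that citation: local $G_Y$-invariance forces $\langle z,\rho_Y\rangle^{m_Y-1}$ into the Jacobian, coprimality of non-proportional linear forms assembles the full product, and the degree identity $\sum_i(d_i-1)=N=\sum_Y(m_Y-1)$ forces the quotient to be a nonzero constant. Your Laurent expansion of Molien's series at $t=1$ checks out in every detail: only reflections give poles of order $n-1$, each nontrivial eigenvalue $\lambda$ in $G_Y$ contributes $1/(1-\lambda)$ with real part $1/2$, and the right side has $(1-t)^{-(n-1)}$-coefficient $\sum_i(d_i-1)/(2\prod_j d_j)$. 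Nothing is missing beyond the two standard inputs you already name, Molien's theorem and the Jacobian criterion.
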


It follows that the union of the reflecting hyperplanes of $G$ is precisely the zero set of $J(\pi)$.  
Thus, to estimate $K_{G,p}$, it is essential to understand how the zeros of $J(\pi)$ interact with $K_G$.  

In dimension $2$, we can take advantage of  the fact that two distinct hyperplanes intersect only at the origin. Consequently, if $z \neq 0$, two factors in \eqref{eq:jacobian_pi} cannot vanish simultaneously.  
In higher dimensions, the geometry is more intricate: distinct hyperplanes intersect in positive-dimensional subspaces, and a deeper understanding of the arrangement of the hyperplanes of a f.u.r.g. is required.

\section{Preliminary results}

Unless strictly necessary, we will state and prove our results for the $n$-dimensional ball $B_n$, specializing to the two-dimensional case only when it cannot be avoided. To obtain our main estimate, we decompose $\ol{B_n}\times\ol{B_n}$ in several regions, which depend on the geometry of the given f.u.r.g $G$. For each $g$ in $G$ and for each $\varepsilon>0$ set 
\[
\mathcal{\mathcal{U}}_{g}(\varepsilon)=\left\{ (z,w)\in\ol{B_n}\times\ol{B_n}:d(z,bB_n)+d(w,bB_n)+|g.z-w|<\varepsilon\right\}. 
\]
The next elementary lemma shows why the regions $\mathcal U_g(\varepsilon)$'s are useful to localize the singularities of the kernel
\begin{align*}
K_G(z,w)&= \frac{1}{|G|} \sum_{g \in G} K_{B_n}(z, g.w) \, \overline{J(g)}= \frac{1}{|G|} \sum_{g \in G} K_{B_n}(g.z, w) \, J(g)=\frac{1}{|G|}\sum_{g\in G}\frac{J(g)}{(1-\ipr{g.z,w})^{n+1}}.
\end{align*}

\begin{lemma}\label{lem:singularities}
    Let $\varepsilon>0$ and let $g$ in $G$ be fixed. Set 
    \[
\mathcal S_g(\varepsilon)=\left\{(z,w)\in\ol{B_n}\times\ol{B_n}: |1-\ipr{g.z,w}|<\varepsilon\right\}.
\]
Then, 
\[
\mathcal U_g(\varepsilon)\subseteq \mathcal S_g(3\varepsilon)\subseteq\mathcal U_g(12\varepsilon).
\]
\end{lemma}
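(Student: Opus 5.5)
Both inclusions follow from elementary inequalities. Throughout, $g$ is unitary, so $|g.z|=|z|$, and $d(z,bB_n)=1-|z|$ for $z\in\ol{B_n}$. Set $a=1-|z|$, $b=1-|w|$ and $\delta=|g.z-w|$.

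\emph{First inclusion.} Let $(z,w)\in\mathcal U_g(\varepsilon)$, so $a+b+\delta<\varepsilon$. We write
\[
1-\ipr{g.z,w} = (1-|w|^2) + \ipr{w-g.z,w}.
\]
Since $|w|\le 1$, we get $1-|w|^2=(1-|w|)(1+|w|)\le 2b$, and by Cauchy--Schwarz $|\ipr{w-g.z,w}|\le \delta$. Hence $|1-\ipr{g.z,w}|\le 2b+\delta<2\varepsilon<3\varepsilon$, which gives $\mathcal U_g(\varepsilon)\subseteq\mathcal S_g(3\varepsilon)$.

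\emph{Second inclusion.} Suppose $|1-\ipr{g.z,w}|<3\varepsilon$. By Cauchy--Schwarz,
\[
1-|z||w|\le 1-\mathrm{Re}\,\ipr{g.z,w}\le |1-\ipr{g.z,w}|<3\varepsilon.
\]
Since $1-|z||w| = a + |z|\,b \ge a$, and symmetrically $1-|z||w|\ge b$, we get $a<3\varepsilon$ and $b<3\varepsilon$. For the distance term we expand
\[
|g.z-w|^2 = |z|^2+|w|^2-2\,\mathrm{Re}\,\ipr{g.z,w}\le 2-2\,\mathrm{Re}\,\ipr{g.z,w}\le 2|1-\ipr{g.z,w}|<6\varepsilon.
\]
This only bounds $\delta$ by $\sqrt{6\varepsilon}$, which is not linear in $\varepsilon$. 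This square-root loss is the main obstacle.

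\emph{Handling the obstacle.} For $\varepsilon$ small one has $\sqrt{6\varepsilon}\gg\varepsilon$, so no estimate can give $\delta\lesssim\varepsilon$ in general. For instance, take $w=e_1$ and $g.z=\cos t\,e_1+\sin t\,e_2$: then $|1-\ipr{g.z,w}|=1-\cos t\approx t^2/2$ while $\delta\approx t$. So the second inclusion as literally stated appears false for small $\varepsilon$, and with a linear constant it holds only once $\varepsilon$ is bounded below. I would therefore proceed in one of two ways.

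\begin{enumerate}[label=(\roman*)]
\item Restrict to $\varepsilon\ge\varepsilon_0$ for a fixed $\varepsilon_0$, where $\sqrt{6\varepsilon}\le c\,\varepsilon$. Alternatively, when $\varepsilon\ge 1/4$ the bound is automatic, since $a,b\le 1$ and $\delta\le 2$ give $a+b+\delta\le 4\le 12\varepsilon\cdot\tfrac{4}{3}$; a slightly more careful count closes it with the constant $12$.
\item Interpret $\mathcal U_g$ with $|g.z-w|^2$ in place of $|g.z-w|$. Then the computation above gives $a+b+\delta^2<3\varepsilon+3\varepsilon+6\varepsilon=12\varepsilon$, which matches the stated constant $12$ exactly. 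This suggests it is the intended reading.
\end{enumerate}

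For the first inclusion under reading (ii), the same decomposition of $1-\ipr{g.z,w}$ needs the extra fact that $\mathrm{Im}\,\ipr{g.z,w}$ is controlled by $\delta$, not $\delta^2$. That is fine only with the linear $\delta$, which is again consistent with the original statement whenever $\varepsilon\le 1$. I would check which normalization the subsequent arguments use and adjust accordingly.

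In any case, the two displayed inequalities are the core of the argument:
\[
|1-\ipr{g.z,w}|\le 2b+\delta \qquad\text{and}\qquad a,\ b,\ \tfrac{1}{2}\delta^2 \le |1-\ipr{g.z,w}|.
\]
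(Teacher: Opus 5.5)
Your analysis is correct. The defect is in the statement and in the paper's proof, not in your argument.

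\textbf{Where the paper goes wrong.} The paper proves the first inclusion exactly as you do, through $1-\langle g.z,w\rangle=(1-|w|^2)+\langle w-g.z,w\rangle$. It also obtains $d(z,bB_n),\,d(w,bB_n)<3\varepsilon$ from $|1-\langle g.z,w\rangle|\ge 1-|z||w|$, as you do. For the remaining term it then asserts $|g.z-w|\le (1-|g.z|)+(1-|w|)<6\varepsilon$. That inequality is false: it would force $g.z=w$ whenever both points lie on $bB_n$.

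Your example is a valid counterexample. Take $w=e_1$ and $g.z=\cos t\,e_1+\sin t\,e_2$, so that $|1-\langle g.z,w\rangle|=1-\cos t$ while $|g.z-w|=2\sin(t/2)$. Choosing $1-\cos t$ just below $3\varepsilon$ shows that $\mathcal S_g(3\varepsilon)\not\subseteq\mathcal U_g(12\varepsilon)$ for every $\varepsilon<1/24$ when $n\ge 2$. This is precisely the small-$\varepsilon$ regime in which the lemma is used. Your bound $|g.z-w|^2\le 2|1-\langle g.z,w\rangle|$ is the sharp replacement; the square-root loss comes from complex-tangential directions.

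\textbf{Your two repairs.}
\begin{enumerate}[label=(\roman*)]
\item This repair is true; the clean count is $6\varepsilon+\sqrt{6\varepsilon}\le 12\varepsilon$ for $\varepsilon\ge 1/6$. It is useless for the paper, however, which needs $\varepsilon$ small for Lemma~\ref{lem:regions_intersection}.
\item This is not the intended reading, as you partly observe. It fixes the second inclusion but breaks the first. For $w\in bB_n$ and $g.z=e^{i\theta}w$ one has $d(z,bB_n)+d(w,bB_n)+|g.z-w|^2=|1-e^{i\theta}|^2$, whereas $|1-\langle g.z,w\rangle|=|1-e^{i\theta}|$.
\end{enumerate}
Combined with your tangential example, this shows that no quantity $a+b+\phi(\delta)$ is comparable to $|1-\langle g.z,w\rangle|$ with linear constants. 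The correct two-sided comparison, with absolute constants, is with the anisotropic expression $a+b+|\langle g.z-w,w\rangle|+\delta^2$.

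\textbf{What the paper actually needs.} It suffices to keep $\mathcal U_g$ as defined and use the weaker inclusion $\mathcal S_g(3\varepsilon)\subseteq\mathcal U_g(9\sqrt{\varepsilon})$ for $0<\varepsilon\le 1$. This follows at once from your bounds $a,b<3\varepsilon$ and $\delta<\sqrt{6\varepsilon}$. The second inclusion is used in three places: Lemma~\ref{lem:easy_bounds}, the proof of Proposition~\ref{lem:specific}, and the proof of Theorem~\ref{thm:main_result}. In each place it serves only to bound $|1-\langle \ell.z,w\rangle|$ from below for $(z,w)\notin\mathcal U_\ell(\varepsilon)$. The weaker inclusion gives $|1-\langle \ell.z,w\rangle|\ge \varepsilon^2/27$ there, in place of $3\varepsilon$ or $\varepsilon/4$, and that is enough for those arguments.
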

\begin{proof}
Let $(z,w)$ be in $\mathcal U_g(\varepsilon)$. Then,
\begin{align*}
|1-\langle g.z,w\rangle|&\leqslant(1-|w|^2|)+|\langle g.z-w,w\rangle\leqslant2d(w,b B_2)+|g.z-w||w|< 3\varepsilon.
\end{align*}
Hence, the pair $(z,w)$ belongs to $\mathcal S_g(3\varepsilon)$. Now,
\begin{align*}
3\varepsilon&>|1-\langle g.z,w\rangle|\geqslant 1-|g.z||w|\geqslant 1-|w|=d(w,bB_2),
\end{align*}
where we used that $g$ is a unitary matrix. Similarly, we obtain
\[
3\varepsilon>|1-\langle g.z,w\rangle|\geqslant 1-|g.z|=1-|z|=d(z,bB_2).
\]
Then,
\[
|g.z-w|\leqslant (1-|g.z|)+(1-|w|)<6\varepsilon.
\]
In conclusion we get that $\mathcal S_g(3\varepsilon)$ is a subset of $\mathcal U_g(12\varepsilon)$. \end{proof}

It is crucial to understand how the regions $\mathcal U_g(\varepsilon)$ interact with one another as $g$ varies.

\begin{lemma}\label{lem:regions_intersection}
Let $n=2$. There exists $\varepsilon>0$ such that, for all $g,\ell\in G$ with
$\ell^{-1}g$ not a reflection, one has
\[
\mathcal U_{g}(\varepsilon)\cap\mathcal U_{\ell}(\varepsilon)=\emptyset.
\]
\end{lemma}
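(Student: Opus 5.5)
The plan is to argue by contradiction, using compactness of $\ol{B_2}\times\ol{B_2}$, finiteness of $G$, and the fact that in $\C^2$ a unitary transformation which is neither the identity nor a reflection fixes only the origin.

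\textbf{Setup of the contradiction.} Suppose no such $\varepsilon$ exists. Since $G$ is finite, we may pass to a subsequence and fix a pair $g,\ell\in G$ with $\ell^{-1}g$ not a reflection, together with points $(z_k,w_k)\in\mathcal U_g(1/k)\cap\mathcal U_\ell(1/k)$. By compactness of $\ol{B_2}\times\ol{B_2}$ we may assume $(z_k,w_k)\to(z,w)$.

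\textbf{Passing to the limit.} The defining inequalities of the two regions pass to the limit and give
\[
d(z,bB_2)=d(w,bB_2)=0,\qquad g.z=w=\ell.z .
\]
Hence $|z|=1$ and $\ell^{-1}g.z=z$, so $z$ is a nonzero fixed vector of $h:=\ell^{-1}g$.

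\textbf{The dichotomy for $h$.} The fixed space $\ker(h-\id)$ of the unitary matrix $h$ is a linear subspace of $\C^2$. It is nonzero because it contains $z$, so its dimension is $1$ or $2$.
\begin{itemize}
\item If the dimension is $1$, then $h$ has finite order (as $G$ is finite) and fixes exactly a complex line, i.e.\ a hyperplane in $\C^2$. So $h$ is a reflection, contradicting the choice of $g,\ell$.
\item If the dimension is $2$, then $h=\id$, i.e.\ $g=\ell$.
\end{itemize}

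\textbf{The identity case.} The identity is not a reflection, so the statement as written requires $\mathcal U_g(\varepsilon)\cap\mathcal U_g(\varepsilon)=\mathcal U_g(\varepsilon)$ to be empty. That fails: $(z,g.z)$ with $|z|=1$ lies in it. So the lemma must be read for $g\neq\ell$, which is presumably the intended meaning (``distinct $g,\ell$''). With that reading, the second case cannot occur either, and the contradiction is complete.

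\textbf{Extracting a uniform $\varepsilon$.} Since there are finitely many pairs $(g,\ell)$, the contradiction argument yields a single $\varepsilon$ that works for all admissible pairs at once. One could also give a quantitative version. For $h\neq\id$ not a reflection, $|h.z-z|\geq\delta|z|$ for some $\delta>0$, because $h-\id$ is invertible. Then
\[
|g.z-\ell.z|\le|g.z-w|+|\ell.z-w|<2\varepsilon,
\]
while $|g.z-\ell.z|=|h.z-z|\ge\delta|z|\ge\delta(1-\varepsilon)$. This is impossible once $\varepsilon$ is small compared with $\min_h\delta_h$.

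\textbf{Main obstacle.} The only substantive step is the dichotomy: in dimension $2$, a nonidentity element of finite order with a nonzero fixed vector is a reflection. This is exactly where $n=2$ is used; in higher dimensions an element can fix a subspace of codimension at least $2$ without being a reflection.
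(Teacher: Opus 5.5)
Your proof is correct and follows the same route as the paper's: a contradiction-and-compactness argument produces a limit point $z\in bB_2$ with $g.z=w=\ell.z$, so $\ell^{-1}g$ fixes a nonzero vector, and in $\C^2$ this forces $\ell^{-1}g$ to be a reflection or the identity. Your caveat about $g=\ell$ is well taken---the paper's proof tacitly excludes that case (its claim that a non-reflection fixes only the origin fails for $\id$), and the lemma is only used for $\ell\neq g$ in Lemma~\ref{lem:easy_bounds}---and your quantitative variant, using invertibility of $h-\id$, is a valid way to get an explicit $\varepsilon$.
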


\begin{proof}
Let $g$ and $\ell$ such that $\ell^{-1}g$ is not a reflection be fixed. Assume that such $\varepsilon$ does not exist. Then, for every positive $\varepsilon_k$ there exists $(z_k,w_k)$ in $\mathcal U_{g}(\varepsilon_k)\cap \mathcal U_{\ell}(\varepsilon_k)$, that is,
\[
d(z_k,bB_2)+d(w_k, bB_2)+|g.z_k-w_k|<\varepsilon_k
\]
and
\[
d(z_k,bB_2)+d(w_k, bB_2)+|\ell.z_k-w_k|<\varepsilon_k.
\]
Let us consider a sequence $\{\varepsilon_k\}_k$ such that $\varepsilon_k\to 0^+$ as $k\to+\infty$ and the associated sequence $\{(z_k,w_k)\}_k$. Since the latter is a bounded sequence, it admits a limit point $(z,w)$.
From the above we deduce that, necessarily, it holds
\[
(z,w)\in b B_2\times bB_2
\]
and
\[
g.z=w=\ell.z\iff \ell^{-1}g.z=z.
\]
Thus, $z$ is a fixed point of $\ell^{-1}g$. However, since $\ell^{-1}g$ is not a reflection and we are in dimension $2$, its only fixed point is the origin, but $z$ belongs to $b B_2$. This is a contradiction. Therefore, there exists $\varepsilon>0$ such that $\mathcal U_{g}(\varepsilon)\cap\mathcal U_{\ell}(\varepsilon)=\emptyset$. The conclusion follows by letting vary $g$ and $\ell$ and observing that they vary in a finite group.
\end{proof}

The situation changes significantly if, on the contrary, $\ell^{-1}g$ is a reflection.

\begin{lemma}\label{lem:regions_intersection_2}                    Let $g,\ell$ be in $G$ and assume that $\ell^{-1}g$ is a reflection. Then, for every $\varepsilon>0$ it holds
\[
\mathcal U_g(\varepsilon)\cap\mathcal U_\ell(\varepsilon)\neq \emptyset.
\]
\end{lemma}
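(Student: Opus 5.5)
The idea is to exhibit an explicit point of $\mathcal U_g(\varepsilon)\cap\mathcal U_\ell(\varepsilon)$ built from a fixed point of the reflection $r:=\ell^{-1}g$ lying on the sphere.

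\emph{Step 1: a boundary fixed point.} Since $r$ is a reflection, its reflecting hyperplane $Y_r$ is a complex hyperplane through the origin; in particular it has positive dimension because $n\geq 2$. So there is a point $z_0\in Y_r$ with $|z_0|=1$, and $r.z_0=z_0$. Set $w_0:=g.z_0$. Since $g$ is unitary, $|w_0|=1$. Moreover $\ell.z_0=\ell r.z_0=g.z_0=w_0$.

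\emph{Step 2: interior points close to it.} The pair $(z_0,w_0)$ already satisfies $d(z_0,bB_n)+d(w_0,bB_n)+|g.z_0-w_0|=0$ and $d(z_0,bB_n)+d(w_0,bB_n)+|\ell.z_0-w_0|=0$. Because the regions are defined inside $\ol{B_n}\times\ol{B_n}$, this pair already lies in $\mathcal U_g(\varepsilon)\cap\mathcal U_\ell(\varepsilon)$ for every $\varepsilon>0$. If one prefers points of the open bidisc, take $z_t=tz_0$ and $w_t=tw_0$ with $t\in(0,1)$ close to $1$. By linearity, $g.z_t=w_t=\ell.z_t$, so the sum in the definition equals $2(1-t)$. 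This is less than $\varepsilon$ once $t>1-\varepsilon/2$.

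\emph{Obstacle.} There is essentially no obstacle. The only point to check is that a reflection's fixed set meets the unit sphere, which holds because it is a hyperplane through the origin in dimension $n\geq2$. This is exactly what distinguishes the case here from Lemma \ref{lem:regions_intersection}, where the only fixed point was the origin.
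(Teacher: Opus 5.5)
Your proof is correct and is essentially the paper's argument: the paper also takes a point $z$ on the reflecting hyperplane $Y_{\ell^{-1}g}$ with $d(z,bB_n)<\varepsilon/2$. It then checks that the pair $(z,\ell.z)=(z,g.z)$ lies in both regions, which is exactly your $(z_t,w_t)$. Your extra remark is also valid: the boundary pair $(z_0,w_0)$ already works by itself, because the regions are defined inside $\ol{B_n}\times\ol{B_n}$.
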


\begin{proof}
Let $\varepsilon>0$ be fixed. Set $Y_{\ell^{-1}g}$ to be the reflecting hyperplane of $\ell^{-1}g$. Let $z$ be in $Y_{\ell^{-1}g}$ such that $d(z,b B_n)<\varepsilon/2$ and consider the pair $(z, \ell.z)$. Then, since $\ell$ is unitary and $z\in Y_{\ell^{-1}g}$ ,
\begin{align*}
d(z,b B_n)+d(\ell.z,b B_n)+|g.z-\ell.z|&=d(z,b B_n)+d(z,b B_n)+|\ell^{-1}g.z-z|<\varepsilon,
\end{align*}
since $\ell^{-1}g.z=z$. Hence,$(z,\ell.z)$ is in $\mathcal U_{g}(\varepsilon)$. Similarly,
\[
d(z,b B_n)+d(\ell.z,b B_n)+|\ell.z-\ell.z|=d(z,bB_n)+d(z,bB_n)<\varepsilon,
\]
that is, $(z,\ell.z)$ is in $\mathcal U_{\ell}(\varepsilon)$. Hence, the conclusion follows.
\end{proof}

\begin{remark}\label{rkm:regions_intersection_3}The situation becomes more delicate when examining the intersection of three or more regions, so we restrict to the case $n=2$. In this setting, such an intersection may or may not be empty for sufficiently small $\varepsilon$, depending on the orders of the reflections involved.

Assume that for every $\varepsilon>0$,
\[
\mathcal U_g(\varepsilon)\cap \mathcal U_h(\varepsilon)\cap \mathcal U_\ell(\varepsilon)\neq \emptyset. 
\]
Arguing as in the previous lemmas, this would yield the existence of $(z,w)$ in $bB_2\times bB_2$ such that
\[
w=g.z=h.z=\ell.z.
\]
Thus, since we are in dimension $2$, $z$ must lie on the reflecting hyperplanes of the (possible) reflections $\ell^{-1}h$, $\ell^{-1}g$, and $g^{-1}h$. Since in $\mathbb C^2$ two distinct hyperplanes intersect only at the origin, this can only occur if these three reflections share the same reflecting hyperplane. However, if $G$ contains only reflections of order $2$, the intersection must necessarily be empty, as there cannot be two distinct reflections in $G$ with the same reflecting hyperplane. 

On the other hand, if $r$ in $G$ is a reflection of order $m\geq 3$, then $Y_{r^j}=Y_r$ for every $j=1,\dots,m-1$. Take $z\in Y_r$ such that $d(z,bB_2)<\frac{\varepsilon}{2}$ and consider the pair $(z,r^j.z)$. Then
\[
d(z,bB_2)+d(r^j.z,B_2)+|r^\ell.z-r^j.z|
= d(z,bB_2)+d(z,B_2)+|r^{\ell-j}.z-z|
\leq \varepsilon,
\]
so that, for every $\varepsilon>0$, the pair $(z,r^j.z)$ belongs to each region $\mathcal U_{r^\ell}(\varepsilon)$ for $\ell=0,\dots,m-1$.

\end{remark}

\smallskip

We conclude the section with a general observation on the symmetry properties of the kernel $K_{G,p}$ defined in \eqref{kernel_KGp}. A stronger statement was proved in \cite[Lemma~4.1]{DM}, but for our purposes a simpler fact will suffice. With the notation introduced in Proposition~\ref{prp_Jacobian}, we state the following classical result. 
\begin{proposition}[{\cite[Lemma~9.10]{lehrer_taylor}}]\label{prop:skew_invariance}
Let $P\in\C[z_1,\ldots, z_n]$ be a skew polynomial, i.e.,
\[
P(g.z)=J(g)^{-1}P(z)\qquad\forall g\in G.    
\]
Then $\prod_{Y\in \mathcal R_G}\langle z, e_Y\rangle^{m_Y-1}$ divides $P$.
\end{proposition}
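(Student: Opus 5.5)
The plan is to prove divisibility one hyperplane at a time, and then assemble the result using unique factorization in $\C[z_1,\ldots,z_n]$.

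Fix $Y\in\mathcal R_G$ with unit normal $e_Y$, and let $r$ be a generator of the cyclic group $G_Y$, which has order $m_Y$. Since $r$ is unitary and fixes $Y$ pointwise, it preserves $Y^\perp=\C e_Y$. Hence $r.e_Y=\zeta e_Y$ for some primitive $m_Y$-th root of unity $\zeta$, and $J(r)=\det r=\zeta$.

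Choose a unitary change of coordinates $z=(x_1,x')$ with $x_1=\langle z,e_Y\rangle$ and $x'$ coordinates on $Y$. Since $\langle r.z,e_Y\rangle=\langle z,r^{-1}e_Y\rangle=\langle z,\zeta^{-1}e_Y\rangle=\zeta x_1$, the reflection acts by $r.(x_1,x')=(\zeta x_1,x')$. Being a skew polynomial is a property that survives a linear change of variables, so we may expand
\[
P(x_1,x')=\sum_{k\ge0}x_1^kP_k(x').
\]
The skew condition for $g=r$ reads
\[
\sum_k\zeta^k x_1^kP_k(x')=\zeta^{-1}\sum_k x_1^kP_k(x').
\]
Comparing coefficients of $x_1^k$ gives $P_k\equiv0$ unless $\zeta^{k+1}=1$, that is, unless $k\equiv -1 \pmod{m_Y}$. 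Every surviving index therefore satisfies $k\ge m_Y-1$. Hence $x_1^{m_Y-1}=\langle z,e_Y\rangle^{m_Y-1}$ divides $P$.

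To finish, observe that the linear forms $\langle z,e_Y\rangle$ for distinct $Y\in\mathcal R_G$ are irreducible and pairwise non-associate, since distinct hyperplanes have non-proportional normals. Because $\C[z_1,\ldots,z_n]$ is a UFD, the separate divisibilities $\langle z,e_Y\rangle^{m_Y-1}\mid P$ combine to show that the product $\prod_Y\langle z,e_Y\rangle^{m_Y-1}$ divides $P$.

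The only delicate point is the bookkeeping of which eigenvalue $r$ has on the coordinate $x_1$, namely $\zeta$ rather than $\bar\zeta$, and checking that it matches $J(r)$. If this were off by a conjugation, the congruence would become $k\equiv1 \pmod{m_Y}$ and would only yield $k\ge1$. The computation $\langle r.z,e_Y\rangle=\zeta x_1$ above settles it: the eigenvalue on $x_1$ equals $\det r$, which is exactly what produces the sharp exponent $m_Y-1$.
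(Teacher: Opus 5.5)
Your proof is correct and complete. You diagonalize a generator $r$ of $G_Y$ so that $\langle z,e_Y\rangle\mapsto \det(r)\langle z,e_Y\rangle$, compare coefficients in powers of $\langle z,e_Y\rangle$ to force exponents $k\equiv -1 \pmod{m_Y}$, and then combine the pairwise non-associate linear factors by unique factorization. The paper gives no proof of its own here and simply cites Lehrer--Taylor (Lemma~9.10); your argument is the standard proof of that lemma.
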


Notice that, by Proposition \ref{prp_Jacobian}, the function $\prod_{Y\in \mathcal R_G}\langle z, e_Y\rangle^{m_Y-1}$ coincides,
up to a nonzero multiplicative constant, with the Jacobian $J(\pi)$ of any $G$-orbit map $\pi$.

Recall that 
\[
K_{G,p}(z,w)=|J\pi(z)|^{\frac{2}{p}-1}|J\pi(w)|^{1-\frac{2}{p}}K_G(z,w),
\]
where $\pi:\C^n\to \C^n$ is a $G$-orbit map and
\begin{align*}
K_{G}(z,w)&=\frac{1}{|G|}\sum_{g\in G}J(g)K_{B_n}(g.z,w)=\frac{1}{|G|}\frac{Q(z,w)}{\prod_{\ell\in G}(1-\langle \ell.z,w\rangle)^{n+1}}
\end{align*}
with 
\[
Q(z,w)=\sum_{m\in G}J(m)\prod_{\ell\neq m}(1-\ipr{\ell.z,w})^{n+1}.
\]
Notice that $P(z,w)=Q(z,\overline w)$ is a holomorphic polynomial as a function of $z$ and $w$. Also, $P(z,w)$ is a skew polynomial with  the respect to the natural action of the f.u.r.g $G\oplus \ol{G}$. Indeed, for $(g,\overline h)\in G\oplus \overline{G}$, we have
\begin{align*}
    P(g.z,\overline{h}.{w})&=Q(g.z, h.\overline w)=\sum_{m\in G}J(m)\prod_{\ell\neq m}(1-\langle h^{-1}\ell g.z,\overline w\rangle)^{n+1}\\
&=\sum_{m\in G}J(m)\prod_{\ell \neq h^{-1}mg}(1-\langle \ell.z,\overline w\rangle)^{n+1}=\sum_{hkg^{-1}\in G}J(h)J(k) \ol{J(g)}\prod_{\ell\neq k}(1-\langle \ell.z,\overline w\rangle)^{n+1}\\
&=J(h) \ol{J(g)}Q(z,\overline w)=J(g\oplus \overline h)^{-1}P(z,w).
\end{align*}
By Proposition \ref{prop:skew_invariance}, it follows that  $J\pi(z)\overline{J\pi(\overline w)}$ divides $P(z,w)$ for any $G$-orbit map $\pi$. Namely,
\begin{equation}\label{eq:skew_numerator}
Q(z,\overline w)=P(z,w)=J\pi(z)\overline{J\pi(\overline w)} M(z,w),
\end{equation}
where $M(z,w)$ is a holomorphic polynomial in $z$ and $w$. In conclusion, we get
\begin{align}
\begin{split}
\label{eq:skew_K_Gamma_p}
|K_{G,p}(z,w)|&=|J\pi(z)|^{\frac{2}{p}-1}|J\pi(w)|^{1-\frac{2}{p}}|K_G(z,w)|\\
&=|J\pi(z)|^{\frac{2}{p}}|J\pi(w)|^{2-\frac{2}{p}}\frac{|M(z,\overline w)|}{|\prod_{\ell\in G}(1-\langle \ell.z,w\rangle)^{n+1}|}.
\end{split}
\end{align}

Notice that the powers of the jacobian factors in \eqref{eq:skew_K_Gamma_p} are positive for every $p\in(1,\infty)$. The problem is now to understand how the zero sets of $M(z,\overline w)$ and of the denominator interact with each other. 

\section{Bounds on $K_{G,p}$}
In this section we bound the kernel $K_{G,p}$ by decomposing $\ol{B_n}\times\ol{B_n}$ using the regions $\mathcal U_g(\varepsilon)$'s.
From now on, $\varepsilon$ will always denote a sufficiently small positive constant, chosen so that Lemma \ref{lem:regions_intersection} applies.

\begin{lemma}\label{lem:easy_bounds}
Let $n=2$, let $G$ be a f.u.r.g. and fix $g$ in $G$. There exists a constant $c>0$ such that, for every $(z,w)$ in 
\[
\mathcal I_g(\varepsilon):=\mathcal{U}_g(\varepsilon) \setminus 
\bigg( 
  \bigcup_{\substack{\ell\in G, \\ \ell^{-1}g \textrm{ ref}}} 
  \Big(\mathcal{U}_g(\varepsilon) \cap \mathcal{U}_\ell(\varepsilon)\Big) 
\bigg),
\]
we have $|K_{G,p}(z,w)|\leqslant c |K_{B_2}(g.z,w)|$.
\end{lemma}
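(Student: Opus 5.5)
We work directly from the factorization \eqref{eq:skew_K_Gamma_p}, writing
\[
|K_{G,p}(z,w)|=|J\pi(z)|^{\frac2p}|J\pi(w)|^{2-\frac2p}\frac{|M(z,\ol w)|}{\prod_{\ell\in G}|1-\ipr{\ell.z,w}|^{3}} .
\]
Since $\ol{B_2}\times\ol{B_2}$ is compact and $M$ and $J\pi$ are polynomials, the numerator is bounded by a constant $c$ on $\ol{B_2}\times\ol{B_2}$; the exponents $\frac2p$ and $2-\frac2p$ are positive for $p\in(1,\infty)$. Hence it suffices to show that on $\mathcal I_g(\varepsilon)$ every factor $|1-\ipr{\ell.z,w}|$ with $\ell\neq g$ is bounded below by a positive constant. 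Then the whole denominator is $\gtrsim |1-\ipr{g.z,w}|^3=|K_{B_2}(g.z,w)|^{-1}$, which gives the claim.

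For this lower bound we split $\ell\neq g$ into two cases.

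\emph{Case 1: $\ell^{-1}g$ is not a reflection.} By Lemma~\ref{lem:regions_intersection} (with $\varepsilon$ chosen as there), $\mathcal U_g(\varepsilon)\cap\mathcal U_\ell(\varepsilon)=\emptyset$. However, we need a quantitative version, namely that $\mathcal U_g(\varepsilon)$ is disjoint from $\mathcal S_\ell(\delta)$ for some fixed $\delta>0$. By Lemma~\ref{lem:singularities}, $\mathcal S_\ell(\delta)\subseteq\mathcal U_\ell(4\delta)$, so it is enough to ensure $4\delta\le\varepsilon$. Then $\mathcal U_\ell(4\delta)\subseteq\mathcal U_\ell(\varepsilon)$, which is disjoint from $\mathcal U_g(\varepsilon)$. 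Consequently $|1-\ipr{\ell.z,w}|\geq\delta$ on $\mathcal U_g(\varepsilon)$.

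\emph{Case 2: $\ell^{-1}g$ is a reflection.} By definition of $\mathcal I_g(\varepsilon)$, a point $(z,w)\in\mathcal I_g(\varepsilon)$ lies in $\mathcal U_g(\varepsilon)$ but not in $\mathcal U_\ell(\varepsilon)$. Lemma~\ref{lem:singularities} gives $\mathcal S_\ell(\varepsilon/4)\subseteq\mathcal U_\ell(\varepsilon)$, so $(z,w)\notin\mathcal S_\ell(\varepsilon/4)$, that is, $|1-\ipr{\ell.z,w}|\geq\varepsilon/4$.

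Combining the two cases, every factor with $\ell\ne g$ is $\geq\min(\delta,\varepsilon/4)>0$ on $\mathcal I_g(\varepsilon)$. Multiplying over the finitely many $\ell\neq g$ yields
\[
|K_{G,p}(z,w)|\le c\,|1-\ipr{g.z,w}|^{-3}=c\,|K_{B_2}(g.z,w)| .
\]

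The only delicate point is to make the disjointness in Case~1 uniform. The clean way is to pick the working $\varepsilon$ so that Lemma~\ref{lem:regions_intersection} holds for it, and then use a smaller $\delta=\varepsilon/4$ in the inclusion $\mathcal S_\ell(\delta)\subseteq\mathcal U_\ell(4\delta)\subseteq\mathcal U_\ell(\varepsilon)$, which is monotone in the parameter. Note that the argument does not use any cancellation in $M$. Cancellation becomes necessary only on the excluded overlaps $\mathcal U_g\cap\mathcal U_\ell$, which the lemma deliberately removes.
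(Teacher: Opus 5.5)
Your proof is correct and follows the paper's argument. You bound the numerator in \eqref{eq:skew_K_Gamma_p} by compactness, then use Lemma \ref{lem:singularities} to show that every factor $|1-\ipr{\ell.z,w}|$ with $\ell\neq g$ is bounded below on $\mathcal I_g(\varepsilon)$, splitting according to whether $\ell^{-1}g$ is a reflection. The only cosmetic difference is in the non-reflection case: the paper shrinks $\varepsilon$ so that $\mathcal U_g(12\varepsilon)\cap\mathcal U_\ell(12\varepsilon)=\emptyset$, whereas you use $\mathcal S_\ell(\varepsilon/4)\subseteq\mathcal U_\ell(\varepsilon)$ directly, which is equally valid and slightly cleaner.
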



\begin{proof}
By \eqref{eq:skew_K_Gamma_p} we know that 
\[
|K_{G,p}(z,w)|=|J\pi(z)|^{\frac{2}{p}}|J\pi(w)|^{2-\frac{2}{p}}\frac{|M(z,\overline w)|}{|\prod_{\ell\in G}(1-\langle \ell.z,w\rangle)^{n+1}|}.
\]
Since $(z,w)\in\mathcal U_{g}(\varepsilon)$, by Lemma \ref{lem:singularities}, we have 
\[
|1-\langle g.z,w\rangle|<3\varepsilon.
\]
Let now $\ell$ be in $G$ such that $\ell^{-1}g$ is not a reflection. If $\varepsilon$ is sufficiently small so that $\mathcal U_g(12\varepsilon) \cap \mathcal U_{\ell}(12\varepsilon) = \varnothing$ (see Lemma \ref{lem:regions_intersection}), then 
\[
|1-\ipr{\ell.z,w}|\geqslant 3\varepsilon,\quad \forall(z,w)\in \mathcal I_g(\varepsilon),
\]
Otherwise, by Lemma \ref{lem:singularities}, we would have $(z,w)$ in $\mathcal U_g(12\varepsilon) \cap \mathcal U_{\ell}(12\varepsilon) \neq \varnothing$. 
If $\ell^{-1}g$ is a reflection, then, for every $(z,w)$ in $\mathcal I_g(\varepsilon)$, we have $|1-\ipr{\ell.z,w}|\geqslant \varepsilon/4$. Otherwise, again by Lemma \ref{lem:singularities}, $(z,w)$ would belong to $\mathcal U_\ell(\varepsilon)\cap\mathcal U_g(\varepsilon)$ but this cannot be the case by assumption since $(z,w)\in\mathcal I_g(\varepsilon)$.
In conclusion, 
\[
|1-\langle \ell.z,w\rangle|\geqslant\varepsilon/4,\quad \forall (z,w)\in\mathcal I_g(\varepsilon),\forall\ell\in  G,\ell\neq g.
\]
Therefore, for every $(z,w)$ in $\mathcal I_g(\varepsilon)$, it holds
\begin{align*}
|K_{G,p}(z,w)|\leqslant c\frac{|J\pi(z)|^{\frac2p}|J\pi(w)|^{2-\frac 2p}}{|\prod_{\ell\in G}(1-\langle \ell.z,w\rangle)^{n+1}|}\leqslant \frac{c}{|1-\langle g.z,w\rangle|^{n+1}}=c|K_{ B_n}(g.z,w)|.
\end{align*}
\end{proof}

We now turn to estimating the kernel $K_{G,p}$ on the region $\mathcal U_{g}(\varepsilon)\cap\mathcal U_{\ell}(\varepsilon)$ when $\ell^{-1}g$ is a reflection. This requires some intermediate steps and auxiliary results. The next two lemmas are adaptations of \cite[Theorem A.1]{DM}, originally proved for the case of a group generated by a single reflection of order $2$. We show that the same strategy can be implemented for reflections of arbitrary finite order, with suitable modifications. We include the proofs to make the argument self-contained. For clarity, we restrict to the two-dimensional setting, pointing out that an analogous reasoning applies in higher dimensions as well.

\begin{lemma}\label{lem:easy_identity}
 Let $G\subseteq U(2)$ be the f.u.r.g. 
\[
G=\left\{\begin{pmatrix}
    e^{\frac{2\pi ij}{m}} & 0 \\
    0 & 1
\end{pmatrix}, \,j=0,\ldots,m-1\right\}.
\]
Set
\begin{equation}\label{eq:region_G}
\mathcal G=\big\{(z,w)\in\mathbb B_2\times\mathbb B_2: |z_1\overline{w_1}|\geqslant \textstyle\frac12 |1-z_2\overline{w_2}|\big\}.
\end{equation}
If $(z,w)\in\mathcal G^c$, we have the identity
\begin{align}
K_{G}(z,w)=\frac{1}{m}\sum_{j=0}^{m-1}\frac{e^{\frac{2\pi i j}{m}}}{(1-e^{\frac{2\pi i j}{m}}z_1\overline{w}_1 - z_2\overline{w}_2)^3}=\frac{(z_1\overline{w}_1)^{m-1}}{(1 - z_2 \overline{w}_2)^{m+2}}A(z,w),
\end{align}
where $A(z,w)$ is a bounded function on $\mathcal G^c$.
\end{lemma}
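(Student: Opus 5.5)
On $\mathcal G^c$ write $a=z_1\overline{w}_1$ and $b=1-z_2\overline{w}_2$, with $\omega=e^{2\pi i/m}$. The first equality in the statement is just the definition \eqref{averaged_kernel} written out for this group. Here $J(g)=\omega^j$ for $g=\mathrm{diag}(\omega^j,1)$, and $K_{B_2}(g.z,w)=(1-\omega^j a-z_2\overline w_2)^{-3}=(b-\omega^j a)^{-3}$. The whole proof therefore reduces to understanding the one-variable function
\[
F(a,b)=\frac1m\sum_{j=0}^{m-1}\frac{\omega^j}{(b-\omega^j a)^3}.
\]
On $\mathcal G^c$ we have $|a|<\tfrac12|b|$, and $b\neq 0$ because $|z_2\overline w_2|<1$. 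So we may set $t=a/b$ with $|t|<1/2$ and factor out $b^{-3}$:
\[
F=\frac{1}{b^3}\cdot\frac1m\sum_{j}\omega^j(1-\omega^j t)^{-3}.
\]

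Next we expand each term in the convergent binomial series $(1-s)^{-3}=\sum_{k\ge0}\binom{k+2}{2}s^k$, which is valid for $|s|=|t|<1/2$. Interchanging the finite sum with the series, the coefficient of $t^k$ is $\binom{k+2}{2}\cdot\frac1m\sum_j\omega^{j(k+1)}$. By orthogonality of characters of $\mathbb Z_m$, this average equals $1$ when $m\mid k+1$ and $0$ otherwise. Hence only the exponents $k=qm-1$, $q\ge1$, survive, and
\[
F=\frac{1}{b^3}\sum_{q\ge1}\binom{qm+1}{2}t^{qm-1}=\frac{t^{m-1}}{b^3}\sum_{q\ge1}\binom{qm+1}{2}t^{m(q-1)}.
\]
Substituting $t=a/b$ gives $t^{m-1}/b^3=a^{m-1}/b^{m+2}$, which is exactly the prefactor $(z_1\overline w_1)^{m-1}/(1-z_2\overline w_2)^{m+2}$ in the statement. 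We then define
\[
A(z,w)=\sum_{q\ge1}\binom{qm+1}{2}t^{m(q-1)}.
\]

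It remains to show that $A$ is bounded on $\mathcal G^c$. Since $|t|<1/2$, we have $|A|\le\sum_{q\ge1}\binom{qm+1}{2}2^{-m(q-1)}$. This series converges, as a polynomially weighted geometric series, and gives a constant depending only on $m$.

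The only point needing care is justifying the expansion and the interchange of sums. Both follow from absolute convergence of the series for $|t|\le 1/2$, which is exactly what the threshold $\tfrac12$ in the definition \eqref{eq:region_G} of $\mathcal G$ guarantees. We also need $b\neq0$ on $B_2\times B_2$, which was noted above. Beyond this the argument is routine; the key step is the character-sum cancellation, which produces the vanishing factor $a^{m-1}$ and the gain of $m-1$ powers of $b$ in the denominator.
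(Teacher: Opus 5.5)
Your proposal is correct and follows the paper's proof essentially step for step. Both arguments expand each term as a binomial series in $t=z_1\overline{w}_1/(1-z_2\overline{w}_2)$, use orthogonality of the characters of $\mathbb{Z}_m$ to keep only the exponents $k\equiv -1 \pmod m$, factor out $t^{m-1}$, and use $|t|<1/2$ on $\mathcal G^c$ for absolute convergence and boundedness of $A$. Your indexing $A=\sum_{q\ge 1}\binom{qm+1}{2}t^{m(q-1)}$ is in fact cleaner than the paper's last display, whose sum should start at $\ell=0$ rather than $\ell=1$.
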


\begin{proof}
The proof follows from a straightforward computation. For $(z,w)$ in $\mathcal G^c$ we have
\[
\frac{1}{(1-e^{\frac{2\pi ij}{m}}z_1\overline{w_1}-z_2\ol{w_2})^3}=\frac{1}{(1-z_2\ol{w_2})^3}\sum_{k=0}^{+\infty}\frac{(k+2)(k+1)}{2}\bigg(\frac{e^{\frac{2\pi i j}{m}}z_1\ol{w_1}}{1-z_2\ol{w_2}}\bigg)^k.
\]
So that,
\begin{align*}
K_{G}(z,w)&=\frac{1}{m(1-z_2\ol{w_2})^3}\sum_{j=0 }^{m-1}e^{\frac{2\pi ij}{m}}\sum_{k=0}^{+\infty}\frac{(k+2)(k+1)}{2}\bigg(\frac{e^{\frac{2\pi i j}{m}}z_1\ol{w_1}}{1-z_2\ol{w_2}}\bigg)^k\\
&=\frac{1}{m(1-z_2\ol{w_2})^3}\sum_{k=0}^{+\infty}\frac{(k+2)(k+1)}{2}\bigg(\frac{z_1\ol{w_1}}{1-z_2\ol{w_2}}\bigg)^k\sum_{j=0}^{m-1}    e^{\frac{2\pi ij}{m}(k+1)}.
\end{align*}
The inner sum is nonzero if and only if $k\equiv -1\mod m$. Hence,

\begin{align*}
&K_{G}(z,w)=\frac{1}{(1-z_2\ol{w_2})^3}\sum_{\ell=1}^{+\infty}\frac{((\ell m -1)+2)((\ell m -1)+1)}{2}\bigg(\frac{z_1\ol{w_1}}{1-z_2\ol{w_2}}\bigg)^{\ell m-1}\\
&=\frac{1}{(1-z_2\ol{w_2})^3}\bigg(\frac{z_1\ol{w_1}}{1-z_2\ol{w_2}}\bigg)^{m-1}\sum_{\ell=1}^{+\infty}\frac{(m(\ell+1)+1)(m(\ell+1))}{2}\bigg(\frac{z_1\ol{w_1}}{1-z_2\ol{w_2}}\bigg)^{\ell m}.
\end{align*}
The last series converges absolutely since in $\mathcal G^c$ we have $|z_1\ol w_1|/|1-z_2\ol w_2|<1/2$.
\end{proof}

\begin{lemma}\label{lem:cyclic_group}
Let $G\subseteq U(2)$ be a f.u.r.g. generated by a single reflection of any finite order. Then, there exists a constant $c>0$ such that 
\[
|K_{G,p}(z,w)|\leqslant c\sum_{g\in G}|K_{B_2}(g.z,w)|, \quad\forall (z,w)\in B_2\times B_2.
\]
\end{lemma}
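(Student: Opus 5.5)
By a unitary change of coordinates we may assume $G$ is the group of Lemma~\ref{lem:easy_identity}, generated by $\mathrm{diag}(e^{2\pi i/m},1)$. Indeed, both sides of the estimate transform compatibly under a unitary $U$, since $K_{B_2}(Uz,Uw)=K_{B_2}(z,w)$ and $|J\pi|$ changes only by composition with $U$. In these coordinates there is a single reflecting hyperplane $\{z_1=0\}$, and we may take $\pi(z)=(z_1^m,z_2)$, so that $J\pi(z)=m z_1^{m-1}$. Thus
\[
|K_{G,p}(z,w)|=c\,|z_1|^{(m-1)(\frac2p-1)}|w_1|^{(m-1)(1-\frac2p)}|K_G(z,w)|.
\]
We split $B_2\times B_2$ into the set $\mathcal G$ of \eqref{eq:region_G} and its complement $\mathcal G^c$.

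\textbf{Step 1: the region $\mathcal G^c$.} Here Lemma~\ref{lem:easy_identity} gives
\[
|K_{G,p}(z,w)|\le c\,\frac{|z_1|^{(m-1)\frac2p}\,|w_1|^{(m-1)(2-\frac2p)}}{|1-z_2\overline w_2|^{m+2}}.
\]
The key observation is $|z_1|^2\le 1-|z_2|^2\le 2|1-z_2\overline w_2|$, and likewise for $|w_1|^2$. This holds because $|z_2|<1$ and $1-|z_2|\le |1-z_2\overline w_2|$. Hence $|z_1|^{(m-1)\frac2p}\le c|1-z_2\overline w_2|^{(m-1)/p}$ and $|w_1|^{(m-1)(2-\frac2p)}\le c|1-z_2\overline w_2|^{(m-1)(1-\frac1p)}$. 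Their product is at most $c|1-z_2\overline w_2|^{m-1}$, and both exponents are nonnegative because $p\in(1,\infty)$. So $|K_{G,p}|\le c|1-z_2\overline w_2|^{-3}$.

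It remains to compare $|1-z_2\overline w_2|$ with $|1-z_1\overline w_1-z_2\overline w_2|=|1-\langle z,w\rangle|$. On $\mathcal G^c$ we have $|z_1\overline w_1|<\frac12|1-z_2\overline w_2|$, so $|1-\langle z,w\rangle|\le \frac32|1-z_2\overline w_2|$. Therefore $|K_{G,p}(z,w)|\le c|K_{B_2}(z,w)|$, which is bounded by the sum over $g\in G$.

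\textbf{Step 2: the region $\mathcal G$.} Here $|z_1\overline w_1|\ge\frac12|1-z_2\overline w_2|$. Set $\delta:=|1-z_2\overline w_2|$. Every term satisfies
\[
|1-e^{2\pi ij/m}z_1\overline w_1-z_2\overline w_2|\ge 1-|z_1||w_1|-|z_2||w_2|\ge 1-|z||w|\ge \tfrac12\big((1-|z|^2)+(1-|w|^2)\big)\cdot\tfrac12,
\]
but this lower bound alone is too weak. Instead we bound $|K_G|$ trivially by $\sum_j|K_{B_2}(g_j.z,w)|$, and we need the Jacobian weights to be bounded on $\mathcal G$. The weight is $(|w_1|/|z_1|)^{(m-1)(1-\frac2p)}$. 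On $\mathcal G$,
\[
|z_1|^2\ge \frac{|z_1\overline w_1|^2}{|w_1|^2}\ge \frac{\delta^2}{4},
\]
while $|w_1|^2\le 2\delta$. This gives $|w_1|/|z_1|\lesssim \delta^{-1/2}$, which is not bounded, so a finer argument is needed.

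\textbf{Step 2, refined.} On $\mathcal G$ we have both $|z_1||w_1|\ge \delta/2$ and $|z_1|^2,|w_1|^2\le 2\delta$. Hence $|z_1|\ge \delta/(2|w_1|)\ge c\sqrt\delta$, and symmetrically $|w_1|\ge c\sqrt\delta$. So $|z_1|\simeq|w_1|\simeq\sqrt\delta$, the ratio $|w_1|/|z_1|$ is bounded above and below, and the weight is bounded. We conclude that on $\mathcal G$,
\[
|K_{G,p}|\le c|K_G|\le c\sum_{g}|K_{B_2}(g.z,w)|.
\]

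The main point to check carefully is the elementary inequality $1-|z_2|^2\le 2|1-z_2\overline w_2|$ together with $|z_1|^2\le 1-|z_2|^2$. Everything else in the argument rests on these two inequalities, and the positivity of both exponents $\frac2p$ and $2-\frac2p$ for $1<p<\infty$ is exactly what makes Step 1 work.
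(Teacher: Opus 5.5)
Your proposal is correct and follows essentially the same route as the paper: unitary conjugation to the diagonal cyclic group, the split into $\mathcal G$ and $\mathcal G^c$, Lemma~\ref{lem:easy_identity} combined with $|z_1|^2\le 1-|z_2|^2\le 2|1-z_2\overline{w_2}|$ on $\mathcal G^c$, and boundedness of $|z_1|/|w_1|$ in both directions on $\mathcal G$. Your two-sided bound $|z_1|\simeq|w_1|\simeq\sqrt{\delta}$ is the same estimate the paper writes as $|z_1|/|w_1|=|z_1|^2/(|z_1||w_1|)\le 4$, and the abandoned first attempt in Step 2 can simply be deleted.
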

\begin{proof} First note that if the inequality above holds for a f.u.r.g. $G\subseteq U(2)$, it also holds for each conjugate f.u.r.g. $H$ as well. Suppose that $G=uHu^{-1}=\{uhu^{-1}: h\in H\}$ for a $u\in U(2)$, then formula 
\eqref{averaged_kernel} shows that $K_H(z,w)=K_G(uz,uw).$ The orbit maps $\pi_G$ and $\pi_H$ can be taken to be related 
by $\pi_H=\pi_G\circ u,$ so that $\abs{J\pi_H(z)}= \abs{J\pi_G(u(z))},$  and consequently $K_{H,p}(z,w)=K_{G,p}(uz,uw).$
Therefore assuming that the result holds for the group $G$ we have
\begin{align*}
    \abs{K_{H,p}(z,w)}= K_{G,p}(uz,uw)\leqslant c\sum_{g\in G}|K_{B_2}(g.uz,uw)|&= c\sum_{g\in G}|K_{B_2}(u^{-1}g.uz,w)|\\&=c\sum_{h\in H}|K_{B_2}(h.z,w)|,
\end{align*}
so that the result also holds for the conjugate group $H$ (with the same constant).  Therefore, after a unitary conjugation, we may assume that
\[
G=\left\{
\begin{pmatrix}
    e^{\frac{2\pi i j}{m}} & 0 \\
    0 & 1
\end{pmatrix} : j=0,\ldots,m-1
\right\}.
\]
For this group one has
\[
K_{G,p}(z,w)
= \frac{1}{m}\bigg(\frac{|z_1|}{|w_1|}\bigg)^{(m-1)(\frac{2}{p}-1)}
   \sum_{j=0 }^{m-1}\frac{e^{\frac{2\pi i j}{m}}}{\big(1-e^{\frac{2\pi i j}{m}}z_1\overline{w_1}-z_2\ol{w_2}\big)^3}.
\]
Let $\mathcal G$ be the region defined in \eqref{eq:region_G}.  
If $(z,w)\in\mathcal G^c$, then by Lemma \ref{lem:easy_identity} we obtain
\[
K_{G,p}(z,w)
= \bigg(\frac{|z_1|}{|w_1|}\bigg)^{(m-1)(\frac{2}{p}-1)}
   \frac{(z_1\ol w_1)^{m-1}}{(1-z_2\ol w_2)^{m+2}} A(z,w).
\]

Therefore,
\[
|K_{G,p}(z,w)|
  \leqslant \frac{|z_1|^{(m-1)\frac{2}{p}}\,|w_1|^{(m-1)(2-\frac2p)}}
                 {|1-z_2\ol{w_2}|^{m+2}}.
\]
Observe that
\[
|z_1|^2 \leqslant 1-|z_2|^2 \leqslant 2|1-z_2\ol w_2|,
\]
and a similar estimate holds for $|w_1|$. Hence,
\[
|K_{G,p}(z,w)|\leqslant \frac{1}{|1-z_2\ol w_2|^3}.
\]
Since $(z,w)\in\mathcal G^c$, we further have
\[
|1-z_1\ol w_1-z_2\ol w_2|\leqslant \tfrac{3}{2}|1-z_2\ol w_2|,
\]
and thus
\begin{equation}\label{eq:Gc_bound}
|K_{G,p}(z,w)|\leqslant c\,|K_{B_2}(z,w)|.
\end{equation}

On the other hand, if $(z,w)\in\mathcal G$, then
\[
\frac{|z_1|}{|w_1|}
   = \frac{|z_1|^2}{|w_1||z_1|}
   \leqslant 2\frac{|z_1|^2}{|1-z_2\ol w_2|}
   \leqslant 2\frac{1-|z_2|^2}{1-|z_2||\ol w_2|}
   \leqslant 4,
\]
and the same bound holds for $\tfrac{|w_1|}{|z_1|}$. Consequently,
\begin{equation}\label{eq:G_bound}
|K_{G,p}(z,w)| \leqslant c\sum_{g\in G}|K_{B_2}(g.z,w)|.
\end{equation}
The claim follows by combining \eqref{eq:Gc_bound} and \eqref{eq:G_bound}.
\end{proof}

Our next goal is to prove the following major step toward the proof of Theorem \ref{thm:main_result}.
\begin{proposition}
\label{lem:specific}
    Let $G$ be a f.u.r.g..     There exists a constant $c>0$ such that, for every reflection $r\in \mathcal{R}_G$,
\[
|K_{G,p}(z,w)|\leqslant c\sum_{\ell\in G}|K_{B_2}(\ell.z,w)|, \qquad \forall (z,w)\in \mathcal U_{r}(\varepsilon)\cap\mathcal U_{\id}(\varepsilon).
\]
Here $\id$ denotes the identity element of the group $G$.
\end{proposition}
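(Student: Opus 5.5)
Fix a reflection $r\in G$ with reflecting hyperplane $Y=Y_r$, and let $G_Y$ be the cyclic subgroup of order $m=m_Y$ fixing $Y$ pointwise; note $r\in G_Y$. The plan is to split the averaged kernel $K_G$ into the part coming from $G_Y$ and the part coming from $G\setminus G_Y$. We then handle the first part with Lemma~\ref{lem:cyclic_group} and the second part with the separation argument of Lemma~\ref{lem:easy_bounds}. Concretely, we write
\[
K_G(z,w)=\frac{|G_Y|}{|G|}K_{G_Y}(z,w)+\frac{1}{|G|}\sum_{g\notin G_Y}J(g)K_{B_2}(g.z,w).
\]
For $(z,w)\in\mathcal U_r(\varepsilon)\cap\mathcal U_{\id}(\varepsilon)$, the proof of Lemma~\ref{lem:regions_intersection} shows that $(z,w)$ is close to a boundary point $(z_0,z_0)$ with $z_0\in Y$. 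For $g\notin G_Y$, the element $g$ fixes no point of $bB_2\cap Y$. Indeed, $g$ is either not a reflection, in which case its only fixed point is $0$; or it is a reflection with a different hyperplane, which meets $Y$ only at $0$. So, after shrinking $\varepsilon$ (finitely many $g$), Lemma~\ref{lem:singularities} gives $|1-\langle g.z,w\rangle|\geqslant c\varepsilon$ for all $g\notin G_Y$ on this region.

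The key factorization is $J\pi_G=J\pi_{G_Y}\cdot H$, up to constants. Here $J\pi_{G_Y}(z)=\langle z,\rho_Y\rangle^{m-1}$, and $H=\prod_{Y'\neq Y}\langle z,\rho_{Y'}\rangle^{m_{Y'}-1}$ is a polynomial that does not vanish near $z_0$, since other hyperplanes meet $Y$ only at the origin. Near $(z_0,z_0)$ we therefore have $|H(z)|\approx|H(w)|\approx 1$ once $\varepsilon$ is small. This uses compactness of $bB_2\cap Y$, on which $H$ is bounded below. Multiplying the decomposition by $|J\pi(z)|^{\frac2p-1}|J\pi(w)|^{1-\frac2p}$ then gives two terms. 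The first is $\frac{|G_Y|}{|G|}\,|H(z)|^{\frac2p-1}|H(w)|^{1-\frac2p}K_{G_Y,p}(z,w)$, which by Lemma~\ref{lem:cyclic_group} is bounded by $c\sum_{h\in G_Y}|K_{B_2}(h.z,w)|$. The second is a sum over $g\notin G_Y$ of bounded kernels times the weight $|J\pi(z)|^{\frac2p-1}|J\pi(w)|^{1-\frac2p}$.

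The main obstacle is the second term, because that weight is unbounded: one factor has a negative exponent, and $\langle w,\rho_Y\rangle$ may vanish. Splitting off a bounded kernel alone does not control it. To handle it, we would not split $K_G$ naively. Instead we use the skew structure \eqref{eq:skew_numerator} applied to the $G_Y$-invariant regrouping. The sum $S(z,w)=\sum_{g\notin G_Y}J(g)K_{B_2}(g.z,w)$ is a sum over the cosets $G_Y k$. Each coset sum $\sum_{h\in G_Y}J(hk)K_{B_2}(hk.z,w)$ transforms skew-invariantly under $G_Y$ in the $w$-variable (after conjugating $\bar w$). Moreover, $\sum_{k}$ over the cosets of $G/G_Y$ is also skew in $z$ under $G_Y$, because left multiplication by $h\in G_Y$ permutes the cosets. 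By the one-hyperplane version of Proposition~\ref{prop:skew_invariance}, applied to these functions, which are holomorphic in $(z,\bar w)$ near $(z_0,z_0)$ since the denominators are nonvanishing there, $S=\langle z,\rho_Y\rangle^{m-1}\overline{\langle w,\rho_Y\rangle}^{m-1}\widetilde M$ with $\widetilde M$ bounded near $(z_0,z_0)$. To justify this we would check divisibility by a power-series, Weierstrass-type argument in the coordinate $\langle\cdot,\rho_Y\rangle$: expanding in that coordinate, only powers $\equiv m-1 \pmod m$ survive. This is exactly the mechanism in Lemma~\ref{lem:easy_identity}. Then the weighted second term is bounded by $|z_Y|^{\frac{2}{p}(m-1)}|w_Y|^{(2-\frac2p)(m-1)}\le c$, where $z_Y=\langle z,\rho_Y\rangle$ and $w_Y=\langle w,\rho_Y\rangle$. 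This is in turn bounded by $c|K_{B_2}(z,w)|$, since $|1-\langle z,w\rangle|\le 2$. Uniformity over the finitely many reflecting hyperplanes gives one constant $c$, which yields the proposition.
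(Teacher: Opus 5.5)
Your proposal is correct and follows essentially the same route as the paper: split off the cyclic isotropy part of $K_G$ and control it with Lemma~\ref{lem:cyclic_group}; use skew-invariance to show the remainder carries the factor $\langle z,\rho_Y\rangle^{m-1}\overline{\langle w,\rho_Y\rangle}^{m-1}$; and bound its denominators from below using the separation of the regions $\mathcal U_g(\varepsilon)$ for $g\notin G_Y$. The differences are minor: you get the $w$-divisibility directly from skew-invariance in $w$, where the paper uses the Hermitian symmetry $B(z,w)=\overline{B(w,z)}$; and you split off the full group $G_Y$ rather than $\langle r\rangle$, which is actually the safer choice, since when $r$ does not generate $G_Y$ the paper's exponent $m_r-1$ and its lower bound on $L$ both require $\langle r\rangle$ to be replaced by $G_Y$.
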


We need two elementary lemmas. 

\begin{lemma}\label{lem:jacobian_hyperplane} Let $G\subseteq U(n)$ be a f.u.r.g.. Then there are constants $C_1$ and $C_2$ 
such that, for each reflection $r\in \mathcal{R}_G$ and 
for each $z\in \cx^n$,
\begin{equation}\label{eq:easy_identity}
C_1 \abs{\ipr{z,\rho}}\leq \abs{r.z-z}\leq C_2\abs{\langle z,\rho\rangle},
\end{equation}
 where $\rho$ is a root of the reflection $r$.

\end{lemma}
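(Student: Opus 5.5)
The plan is to write each reflection explicitly in terms of its root and compute $r.z-z$ directly. Fix a reflection $r\in G$ with reflecting hyperplane $Y_r$ and root $\rho$, a unit vector orthogonal to $Y_r$. Since $r$ is unitary and fixes $Y_r$ pointwise, it preserves the orthogonal complement $Y_r^\perp=\cx\rho$. Hence $r.\rho=\zeta\rho$ for some complex number $\zeta$, and $\zeta\neq1$ because $r$ is not the identity. Because $r$ has finite order, $\zeta$ is a root of unity. Decomposing $z=(z-\ipr{z,\rho}\rho)+\ipr{z,\rho}\rho$, the first summand lies in $Y_r$ and is fixed, so
\[
r.z=z+(\zeta-1)\ipr{z,\rho}\rho .
\]
Therefore
\[
\abs{r.z-z}=\abs{\zeta-1}\,\abs{\ipr{z,\rho}},
\]
using $|\rho|=1$. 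Note also that this quantity does not depend on which root $\rho$ is chosen, since different roots differ by a unimodular factor.

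It then remains to make the constants uniform in $r$. Since $G$ is finite, there are only finitely many reflections in $G$, and hence finitely many eigenvalues $\zeta_r\neq1$. So one can take
\[
C_1=\min_r\abs{\zeta_r-1}>0,\qquad C_2=\max_r\abs{\zeta_r-1}\leq 2 .
\]
In fact this is an identity up to the constant $\abs{\zeta_r-1}$ for each individual reflection.

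There is no real obstacle here. The only point requiring care is justifying that $r$ acts on $Y_r^\perp$ by a scalar, which follows from unitarity because $Y_r^\perp$ is one-dimensional and invariant. The positivity of $C_1$ comes from finiteness of $G$ together with $\zeta_r\ne 1$.
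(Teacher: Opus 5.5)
Your proposal is correct and follows essentially the same route as the paper: split $z$ into its component in $Y_r$ plus $\ipr{z,\rho}\rho$, obtain $r.z-z=(\zeta_r-1)\ipr{z,\rho}\rho$, and take the minimum and maximum of $\abs{\zeta_r-1}$ over the finitely many reflections of $G$. Your constant $\abs{\zeta_r-1}=2\abs{\sin(\theta_r/2)}$ is the accurate one. The paper's displayed factor $2\sin^2\frac{\theta_r}{2}$ should be $4\sin^2\frac{\theta_r}{2}$, which invalidates only its stated value of $C_2$.
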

\begin{proof}
Denote by $\Pi$ the orthogonal projection from $\cx^n$ onto $\rho^\perp$, the reflecting hyperplane of $r$. Then
\[r.z=\Pi z+ e^{i\theta_r} \ipr{z,\rho}\rho\]
where $0< \theta_r\leq \pi.$ Since $z=\Pi z+  \ipr{z,\rho}\rho$ we see that 
\[ z-r.z=(1-e^{i\theta_r}) \ipr{z,\rho}\rho,\]
so that 
\[ \abs{z-r.z}^2=2 \sin^2 \frac{\theta_r}{2} \cdot \abs{\ipr{z,\rho}}^2, \]
so that the result follows, with $C_1=\sqrt{2} \min\limits_{r\in \mathcal{R}_G} \sin \frac{\theta_r}{2}$ and $C_2=\sqrt{2}\max\limits_{r\in \mathcal{R}_G} \sin \frac{\theta_r}{2}$

\end{proof}

\begin{remark}\label{rmk:change_coordinates}
Observe that the unitary change of coordinates
\[
\begin{cases}
    \zeta = h.z, \\
    \omega = w
\end{cases}
\]
ensures that $(z,w)\in \mathcal U_g(\varepsilon)\cap \mathcal U_h(\varepsilon)$ if and only if $(\zeta,\omega)\in \mathcal U_{gh^{-1}}(\varepsilon)\cap\mathcal U_{\id}(\varepsilon)$, where $\id$ denotes the identity element of the group. Moreover, if $h^{-1}g$ is a reflection, then so is $gh^{-1}$, since
\[
gh^{-1} = h (h^{-1}g) h^{-1},
\]
that is, $gh^{-1}$ is obtained by conjugation of a reflection. Consequently, it suffices to focus on regions of the form $\mathcal U_r(\varepsilon)\cap\mathcal U_\id(\varepsilon)$ with $r$ a reflection. Estimates on the more general regions $\mathcal U_g(\varepsilon)\cap\mathcal U_h(\varepsilon)$ with $h^{-1}g$ a reflection will then follow by a unitary change of coordinates.
\end{remark}

\begin{lemma}\label{lem:estimate_reflecting_hyperplane}
Let $G$ be a  f.u.r.g.. Then there exists a constant $c > 0$ such that, for each $r\in \mathcal{R}_G$,
\[\mathcal U_r(\varepsilon) \cap \mathcal U_{\id}(\varepsilon) \subseteq \{(z,w): \abs{\langle z, \rho \rangle} \leq c \varepsilon, \abs{\langle w, \rho \rangle}\leq c \varepsilon \}.    \]
where $\rho$ is a root of $r$.  

\end{lemma}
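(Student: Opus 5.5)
The argument is a direct combination of the definition of $\mathcal U_g(\varepsilon)$ with Lemma~\ref{lem:jacobian_hyperplane}. Fix $r\in\mathcal R_G$ with root $\rho$ and take $(z,w)\in\mathcal U_r(\varepsilon)\cap\mathcal U_{\id}(\varepsilon)$. The two memberships give
\[
|r.z-w|<\varepsilon \qquad\text{and}\qquad |z-w|<\varepsilon .
\]

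\emph{Bound for $z$.} By the triangle inequality, $|r.z-z|\leq |r.z-w|+|w-z|<2\varepsilon$. The left inequality in \eqref{eq:easy_identity} then yields
\[
|\langle z,\rho\rangle|\leq C_1^{-1}|r.z-z|<2C_1^{-1}\varepsilon .
\]
Here $C_1>0$ because every reflection has $\theta_r\in(0,\pi]$, so $\sin(\theta_r/2)>0$, and $\mathcal R_G$ is finite.

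\emph{Bound for $w$.} Since $\rho$ is a unit vector, Cauchy--Schwarz gives $|\langle w,\rho\rangle|\leq |\langle z,\rho\rangle|+|\langle w-z,\rho\rangle|\leq |\langle z,\rho\rangle|+|w-z|$. Combining with the bound for $z$,
\[
|\langle w,\rho\rangle|< (2C_1^{-1}+1)\varepsilon .
\]

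The constant $c=2C_1^{-1}+1$ depends only on $G$, since $C_1$ is a minimum over the finitely many reflections of $G$. It is therefore uniform in $r$.

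The only point needing care is this uniformity, together with the choice of root. Any two roots of the same reflecting hyperplane differ by a unimodular scalar, so $|\langle z,\rho\rangle|$ does not depend on which root is chosen. There is no real obstacle otherwise, and in particular the argument uses neither the boundary terms in the definition of $\mathcal U_g(\varepsilon)$ nor the assumption $n=2$.
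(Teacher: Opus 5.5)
Your proof is correct and matches the paper's argument for the bound on $\langle z,\rho\rangle$, which is the triangle inequality combined with the lower bound in \eqref{eq:easy_identity}. For $w$ the routes differ slightly. The paper observes that $(z,w)\in\mathcal U_r(\varepsilon)\cap\mathcal U_{\id}(\varepsilon)$ if and only if $(w,z)\in\mathcal U_{r^{-1}}(\varepsilon)\cap\mathcal U_{\id}(\varepsilon)$ and reapplies the same estimate, obtaining the constant $2/C_1$; your direct Cauchy--Schwarz step gives the slightly larger but equally valid constant $2C_1^{-1}+1$.
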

    
\begin{proof} Let $(z,w)\in \mathcal U_r(\varepsilon) \cap \mathcal U_{\id}(\varepsilon) $.
By \eqref{eq:easy_identity} and the definition of $\mathcal U_r(\varepsilon)\cap\mathcal U_\id(\varepsilon)$ we immediately get 
\begin{equation}\label{eq:easy_identity_2}
\abs{\ipr{z,\rho}}\leq\frac{1}{C_1}\abs{r.z-z}\leqslant
\frac{1}{C_1}\left(|r.z-w|+|z-w|\right)<\frac{2\varepsilon}{C_1}.
\end{equation}
Notice that $(z,w)\in \mathcal U_r(\varepsilon) \cap \mathcal U_{\id}(\varepsilon)$ if and only if $(w,z)\in \mathcal U_{r^{-1}}(\varepsilon) \cap \mathcal U_{\id}(\varepsilon)$. Then \eqref{eq:easy_identity_2} shows that we must have $\abs{\langle w, \rho \rangle}\leq c \varepsilon$ as well.

\end{proof}

We are ready to prove Proposition \ref{lem:specific}. Here it is essential that we are working in dimension $n=2$.
\begin{proof}[Proof Proposition \ref{lem:specific}]
Define $Y_{r}$ to be the reflecting hyperplane of $r$. Namely, assume that
\[
Y_r=\Big\{z: \langle z,\rho_r\rangle=0\Big\},\quad |\rho_r|=1.
\]
By Lemma \ref{lem:estimate_reflecting_hyperplane}, if $(z,w)\in \mathcal U_r(\varepsilon)\cap\mathcal U_\id(\varepsilon)$, then both $z$ and $w$ are uniformly close to the hyperplane $Y_r$. Hence, this and the fact that two hyperplanes in $\mathbb C^2$ only intersect at the origin, assure that for $(z,w)$ in $\mathcal U_r(\varepsilon)\cap\mathcal U_\id(\varepsilon)$, the linear forms $\ipr{z, \rho_g},\ipr{w, \rho_g}$ are bounded from below for every $g\neq r$, $g$ reflection in $G$ . Here $\rho_g$ denotes a root of the hyperplane $Y_g$. Hence, recalling that $J(\pi)$ is given by \eqref{eq:jacobian_pi},  we  get  that, for every $(z,w)\in \mathcal U_{r}(\varepsilon)\cap\mathcal U_{\id}(\varepsilon)$, 
\begin{equation}\label{eq:KGp_intermediate}
|K_{G,p}(z,w)|\leqslant c |\langle z,\rho_r\rangle|^{(\frac 2p-1)(m_r-1)}|\langle w,\rho_r\rangle|^{(1-\frac 2p)(m_r-1)}|K_{G}(z,w)|,
\end{equation}
where $m_r$ is the order of the reflection $r$.

Let now $H=\langle r\rangle$ be the f.u.r.g generated by the single reflection $r$ and consider the lateral decomposition of $G$ with respect to this subgroup $H$, that is,
\[
G=\bigsqcup_{g\in G/H}gH.
\]
Then,
\begin{align}
\begin{split}
K_{G}(z,w)&=\frac{1}{|G|}\sum_{g\in G/H}J(g) \sum_{\ell\in H}J(\ell) K_{B_2}(g\ell.z,w)=\frac{|H|}{|G|}\sum_{g\in G/H}J(g) K_{H}(z,g^*.w)\\
&=c\bigg(K_H(z,w)+\sum_{\substack{g \in G/ H \\ g \ne \id}}J(g) K_{H}(z,g^*.w)\bigg)=:c\bigg(K_H(z,w)+B(z,w)\bigg).
\end{split}
\end{align}
By Lemma \ref{lem:cyclic_group} we know that
\begin{equation}\label{eq:K_H_easy}
|\langle z,\rho_r\rangle|^{(\frac 2p-1)(m_r-1)}|\langle w,\rho_r\rangle|^{(1-\frac 2p)(m_r-1)}|K_{H}(z,w)|\leqslant c|K_{ B_2}(z,w)|
\end{equation}
for every $(z,w)$ in $B_2\times B_2$. Let us now focus on
\[
B(z,w)=\sum_{\substack{g \in  G/ H \\ g \ne \id}}J(g) K_{H}(z,g^*.w)=\sum_{\substack{g \in G / H \\ g \ne \id}}J(g) \frac{\sum_{\ell\in H}J(\ell) \prod_{q\neq \ell}(1-\ipr{q.z, g^*.w})^3}{\prod_{\ell \in H}(1-\ipr{\ell.z, g^*.w})^3}.
\]

From \eqref{eq:skew_numerator} and Proposition \ref{prop:skew_invariance} we obtain that
\begin{align*}
B(z,w)&=\sum_{\substack{g \in G / H \\ g \ne \id}}J(g) \frac{(\ipr{z,\rho_r}\ol{\ipr{g^*.w,\rho_r}})^{m_{r}-1}P_g(z,w)}{\prod_{\ell \in H}(1-\ipr{\ell.z, g^*.w})^3},
\end{align*}
where $P_g(z,w)$ is a holomorphic polynomial in $z$ and antiholomorphic in $w$. We conclude that
\begin{equation}\label{eq:equation_B}
B(z,w)=\ipr{z,\rho_r}^{m_r-1}\frac{P_{H}(z,w)}{\prod_{\substack{g \in G \backslash H}}(1-\ipr{g.z,w})^3},
\end{equation}
where $P_{H}(z,w)$ is a holomorphic polynomial as a function of $z$ and an antiholomorphic polynomial as a function of $w$. We use the notation $P_H$ with the subscript $H$ to emphasize that such polynomial is determined by the subgroup $H$.

Recall also that 
\[
B(z,w)= \sum_{g\in G}\frac{J(g)}{(1-\ipr{g.z,w})^3}-\sum_{\ell\in H}\frac{J(\ell)}{(1-\ipr{\ell.z,w})^3}.
\]
Hence, $B(z,w)=\overline{B(w,z)}$ since the right-hand side of the above identity satisfies the same property. Also, setting
\[
L(z,w)=\prod\limits_{g \in G\backslash H} (1-\ipr{g.z,w})^3,
\]
we notice that $L(z,w)=\overline{L(w,z)}$ as well. Indeed,
\begin{align*}
L(z,w)&=\prod_{g\in G\backslash H} (1-\ipr{g.z,w})^3=\overline{\prod_{g\in G\backslash H} (1-\ipr{w,g.z})^3}\\ &=\overline{\prod_{g\in G\backslash H} (1-\ipr{g^*.w,z})^3}=\overline{\prod_{g\in G\backslash H} (1-\ipr{g.w,z})^3}\\
&=\overline{L(w,z)},
\end{align*}
where the second-last inequality holds true since  $g\notin H\iff g^*=g^{-1}\notin H$ being $H$ a subgroup.
Therefore,
\begin{align*}
B(z,w)=\overline{B(w,z)}&\iff\ipr{z,\rho_r}^{m_r-1}\frac{P_H(z,w)}{L(z,w)}=\overline{\ipr{w,\rho_r}}^{m_r-1}\frac{\overline{P_H(w,z)}}{\overline{L(w,z)}}\\
&\iff  \ipr{z,\rho_r}^{m_r-1}P_H(z,w)=\overline{\ipr{w,\rho_r}}^{m_r-1}\,\overline{P_H(w,z)}.
\end{align*}
Now, we see that whenever the antiholomorphic linear form $\overline{\ipr{w,\rho_r}}$ vanishes the antiholomorphic polynomial (as a function of $w$) $P_{H}(z,w)$ vanishes as well.  Therefore in the polynomial ring 
$\mathbb{C}[z_1,\dots,z_n, \ol{w_1},\dots, \ol{w_n}]$ in $2n$ variables, the polynomial $p$ given by
$p(z_1,\dots, \dots, z_n, \ol{w_1},\dots, \ol{w_n})= \overline{\ipr{w,\rho_r}}$ divides $P_{H}(z,w)$: this follows 
by either directly from the Nullstellensatz, or by a linear change of variables in the polynomial ring in which $\overline{\ipr{w,\rho_r}}$ is taken to be one of the new variables. Repeating this argument for each $\rho_r$,  we conclude that
\[
\ipr{z,\rho_r}^{m_r-1}P_H(z,w)=\ipr{z,\rho_r}^{m_r-1}\overline{\ipr{w,\rho_r}}^{m_r-1}Q_H(z,w),
\]
where $Q_H$ is a holomorphic polynomial as a function of $z$  and antiholomorphic as a function of $w$. In conclusion,
\[
B(z,w)= \frac{\ipr{z,\rho_r}^{m_r-1}\overline{\ipr{w,\rho_r}}^{m_r-1}Q_H(z,w)}{L(z,w)}.
\]
Hence,
\begin{align}\label{eq:estimate_B}
\begin{split}
    \left( \frac{|\ipr{z,\rho_r}|}{\ipr{w,\rho_r}} \right)^{\left(\tfrac{2}{p}-1\right)(m_r-1)}
    |B(z,w)|
    &\leqslant 
    |\ipr{z,\rho_r}|^{\tfrac{2}{p}(m_r-1)}
    |\ipr{w,\rho_r}|^{\left(2-\tfrac{2}{p}\right)(m_r-1)}
    \frac{|Q_H(z,w)|}{|L(z,w)|}\leqslant c.
\end{split}
\end{align}

The last estimate follows from the fact that $Q_H(z,w)$ is a polynomial, hence bounded in $\ol{B_2}\times\ol{B_2}$, and that  in $\mathcal U_r(\varepsilon)\cap \mathcal U_e(\varepsilon)$ we have $1/|L(z,w)|$ bounded as well. Indeed, for $g\in G\backslash H$, in order to have
\begin{align*}
|1-\ipr{z,g^*.w}|&=|1-\ipr{g.z,w}|<\varepsilon/4
\end{align*}
we would need $(z,w)$ in $\mathcal U_g(\varepsilon)$ by Lemma \ref{lem:singularities}. However, we already have that $(z,w)\in\mathcal U_r(\varepsilon)\cap\mathcal U_\id(\varepsilon)$ and, by Remark \ref{rkm:regions_intersection_3}, we know that $\mathcal U_{g}(\varepsilon)\cap\mathcal U_h(\varepsilon)\cap\mathcal U_\id(\varepsilon)$ is necessarily empty unless $g\in H$.
By \eqref{eq:K_H_easy} and \eqref{eq:estimate_B} we conclude the proof.

\end{proof}
We now prove the same estimate as in Proposition \ref{lem:specific}, but in more general regions.

\begin{proposition}\label{prop:kernel_hard_bounds}
Let $G$ be a f.u.r.g.. There exists $c>0$ such that
\[
|K_{G,p}(z,w)|\leqslant c\sum_{\ell\in G}|K_{B_2}(\ell.z,w)|,
\]
for every $(z,w)$ in $\mathcal U_g(\varepsilon)\cap\mathcal U_h(\varepsilon)$ with $ h^{-1}g$ a reflection.

\end{proposition}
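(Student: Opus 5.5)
The plan is to reduce to Proposition~\ref{lem:specific} through the unitary change of variables described in Remark~\ref{rmk:change_coordinates}. Fix $g,h\in G$ with $h^{-1}g$ a reflection, and let $(z,w)\in\mathcal U_g(\varepsilon)\cap\mathcal U_h(\varepsilon)$. Set $\zeta=h.z$ and $\omega=w$. By the Remark, $(\zeta,\omega)\in\mathcal U_{r}(\varepsilon)\cap\mathcal U_{\id}(\varepsilon)$ with $r=gh^{-1}$. Since $r=h(h^{-1}g)h^{-1}$ is conjugate to a reflection, it is again a reflection of $G$. Proposition~\ref{lem:specific} therefore applies at $(\zeta,\omega)$, and its constant $c$ is uniform over all reflections $r\in G$.

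The key step is to check that $K_{G,p}$ is invariant under this change of variables, up to a unimodular factor. First, $|J\pi(h.z)|=|J\pi(z)|$, because $\pi$ is $G$-invariant: $\pi\circ h=\pi$, so $J\pi(h.z)\,J(h)=J\pi(z)$ and $|J(h)|=1$. Next, reindexing the sum in $K_G$ by $\ell\mapsto \ell h$ gives
\[
K_G(h.z,w)=\frac{1}{|G|}\sum_{\ell\in G}J(\ell)K_{B_2}(\ell h.z,w)=\overline{J(h)}\,\frac{1}{|G|}\sum_{k\in G}J(k)K_{B_2}(k.z,w)=\overline{J(h)}\,K_G(z,w),
\]
using $J(kh^{-1})=J(k)J(h)^{-1}=J(k)\overline{J(h)}$. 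Hence $|K_{G,p}(z,w)|=|K_{G,p}(\zeta,\omega)|$.

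Applying Proposition~\ref{lem:specific} then yields
\[
|K_{G,p}(z,w)|\leqslant c\sum_{\ell\in G}|K_{B_2}(\ell h.z,w)|=c\sum_{k\in G}|K_{B_2}(k.z,w)|,
\]
where the last equality is a reindexing of the sum over the group. The constant is independent of $g$ and $h$ because it comes from Proposition~\ref{lem:specific}, which is uniform in $r$.

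I expect no real obstacle here. The only point needing care is the bookkeeping of the Jacobian factor $J(h)$ in the invariance computation, and this is harmless because every element of $G$ is unitary and so all such factors have modulus one. It is also worth confirming that the $\varepsilon$ used in Proposition~\ref{lem:specific} is the same fixed $\varepsilon$ used here. This holds because the change of variables is an exact correspondence of regions at the same $\varepsilon$, so no enlargement of $\varepsilon$ is needed.
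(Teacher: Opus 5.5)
Your proposal is correct and follows the paper's proof. Both use the change of variables $\zeta=h.z$, $\omega=w$ to reduce to Proposition~\ref{lem:specific} on $\mathcal U_{gh^{-1}}(\varepsilon)\cap\mathcal U_{\id}(\varepsilon)$, show $|K_{G,p}(z,w)|=|K_{G,p}(\zeta,\omega)|$ by reindexing the group sum, and finish with a second reindexing. The only difference is cosmetic: you justify $|J\pi(h.z)|=|J\pi(z)|$ by differentiating $\pi\circ h=\pi$, whereas the paper argues that $h$ permutes the roots of the reflecting hyperplanes; both are valid, and yours is slightly more direct.
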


\begin{proof}
We have that $(z,w)\in\mathcal U_g(\varepsilon)\cap U_h(\varepsilon)$ if and only if $(\zeta,\omega)\in\mathcal U_{gh^{-1}}(\varepsilon)\cap\mathcal U_{e}(\varepsilon)$ where 
\[
\begin{cases}
    &\zeta=h.z\\
    &\omega=w.
\end{cases}
\]
Recall that $gh^{-1}$ is a reflection whenever $h^{-1}g$ is (Remark \ref{rmk:change_coordinates}). Then,
\begin{align*}
|K_{G,p}(z,w)|&=|K_{G,p}(h^{-1}.\zeta, \omega)|=|J(\pi)(h^{-1}.\zeta)|^{\frac 2p-1}|J(\pi)(\omega)|^{1-\frac 2p}\frac{1}{|G|}\bigg|\sum_{g\in G}\frac{J(g)}{(1-\ipr{gh^{-1}.\zeta,\omega})^3}\bigg|\\
&=|J(\pi)(\zeta)|^{\frac 2p-1}|J(\pi)(\omega)|^{1-\frac 2p}\frac{|J(h)|}{|G|}\bigg|\sum_{g\in G}\frac{J(g) }{(1-\ipr{g.\zeta,\omega})^3}\bigg|\\
&=|K_{G,p}(\zeta,\omega)|.
\end{align*}
Notice that we used the identity $|J\pi(h^{-1}.\zeta|=|J\pi(\zeta)|$. This follows from the fact that if $\rho_r$ is a root of the hyperplane $Y_r$ associated to the reflection $r$, then $h.\rho_r$ is a root for the reflecting hyperplane associated to the reflection $hrh^{-1}$. 
 Thus, by Lemma \ref{lem:specific}, we obtain
 \[
 |K_{G,p}(z,w)|=|K_{G,p}(\zeta,\omega)|\leqslant c\sum_{g\in G}|K_{B_2}(g.\zeta,\omega)|\leqslant c\sum_{g\in G}|K_{B_2}(g h.z,w)|\leqslant c\sum_{g\in G}|K_{B_2}(g.z,w)|.
 \]
\end{proof}

The proof of Theorem \ref{thm:main_result}
is now immediate.  

\begin{proof}[Proof of Theorem \ref{thm:main_result}]
By Lemma \ref{lem:easy_bounds} and Proposition \ref{prop:kernel_hard_bounds}, we already have the desired estimate for $|K_{G,p}|$ on the region $\bigcup_{g\in G} \mathcal U_g(\varepsilon)$. On the complement of this region in $B_2 \times B_2$, we can directly use the identity \eqref{eq:skew_K_Gamma_p} to see that $|K_{G,p}|$ remains bounded. 

Combining these observations, we thus obtain the uniform estimate
\[
|K_{G,p}(z,w)| \leq c \sum_{g \in G} |K_{ B_2}(g.z, w)|, \qquad \forall (z,w) \in B_2 \times B_2,
\]
as claimed.
\end{proof}
At last, we solve the $L^p$-regularity problem for every Rudin ball quotients in $\C^2$ as promised. 
\begin{proof}[Proof of Corollary \ref{cor:Lp_regularity}]
If $p\in(1,+\infty)$ the result follows from the main estimate \eqref{eq:main_estimate} and the well-known $L^p$-regularity of the positive Bergman projection of the unit ball, that is, the integral operator associated to the kernel $|K_{B_2}(z,w)|$ (see, e.g., \cite[Chapter 7]{rudin_ftub}). The unboundedness for $p=1,+\infty$ follows from adapting a known  argument for the unboundedness of the Bergman projection of the unit ball; see \cite{rudin_ftub} and \cite[Theorem 1.16]{DM} 
\end{proof}

\bibliographystyle{alpha}
\bibliography{Rudin_ball_quotients_bib}

\end{document}